\newtheorem{theorem}{Theorem}
\theoremstyle{plain}
\newtheorem{corollary}[theorem]{Corollary}
\newtheorem{lemma}[theorem]{Lemma}
\newtheorem{assumption}[theorem]{Assumption}
\newtheorem{proposition}[theorem]{Proposition}
\theoremstyle{remark}
\newtheorem{example}[theorem]{Example}
\numberwithin{equation}{section}
\numberwithin{theorem}{section}
\newcommand{\R}{\ensuremath{\mathbb{R}}}
\newcommand{\E}{\ensuremath{\mathbb{E}}}
\newcommand{\V}{\ensuremath{\mathcal{V}}}
\newcommand{\U}{\mathcal{U}}
\newcommand{\Hi}{\mathcal{H}}
\newcommand{\bS}{\bar{\mathcal S}}
\definecolor{darkgreen}{rgb}{0, .5, 0}
\providecommand{\norm}[1]{\ensuremath{\lVert#1\rVert}}
\providecommand{\cadlag}{c\`adl\`ag }
\providecommand{\Levy}{L\'evy }
\newcommand{\<}{\langle}
\renewcommand{\>}{\rangle}
\title[Stochastic Volterra equations and SPDEs]{Stochastic Volterra integral equations and a class of first order stochastic partial differential equations}
\author[Benth, Detering and Kr\"uhner]{Fred Espen Benth, Nils Detering and Paul Kr\"uhner}
\address[F. E. Benth]{
Department of Mathematics \\
University of Oslo\\
P.O. Box 1053, Blindern\\
N--0316 Oslo, Norway}
\email[]{fredb\@@math.uio.no}
\urladdr{http://folk.uio.no/fredb/}
\address[N. Detering]{
Department of Statistics and Applied Probability \\
University of California, Santa Barbara\\
93106\\
USA}
\email[]{detering\@@pstat.ucsb.edu}
\urladdr{http://www.pstat.ucsb.edu/faculty/detering/}
\address[P. Kr\"uhner]{
Institute for Financial and Actuarial Mathematics \\ 
University of Liverpool \\
The Mathematical Sciences Building, Liverpool L69 7ZL\\
UK}
\email[]{P.Eisenberg\@@liverpool.ac.uk}
\date{\today}
\thanks{The authors are grateful to Bernt {{\O}}ksendal for proposing this topic. F. E. Benth acknowledges financial support from "FINEWSTOCH", funded by the Norwegian 
Research Council.}
\begin{document}
\begin{abstract}
We investigate stochastic Volterra equations and their limiting laws. The stochastic Volterra equations we consider are driven by a Hilbert space valued \Levy noise and integration kernels may have non-linear dependence on the current state of the process. Our method is based on an embedding into a Hilbert space of functions which allows to represent the solution of the Volterra equation as the boundary value of a solution to a stochastic partial differential equation. We first gather abstract results and give more detailed conditions in more specific function spaces.
%
\end{abstract}

\maketitle

\section{Introduction}
Stochastic Volterra integral equations (SVIE) appear for example in population dynamics and spread of epidemics (see \citep{GLS}) and in mathematical finance as stochastic volatility models  (see \citep{GJR}). The defining characteristic of stochastic Volterra integral equations is that they are in some way defined based on an integral of the form $\int_0^t K(t,s) d M (t)$ with a possibly stochastic integrand kernel $K$ depending on the integration horizon $t$ and some stochastic process $M$ as integrator. They have first been systematically analyzed in \citep{Berger1980,Berger19802} although specific cases appeared in the literature before (see references in \citep{Berger1980,Berger19802}).  The analysis of SVIEs has later been extended in many directions, for example to allow for a term in the equation that is not adapted \citep{Protter1990,Oksendahl1993}, for singular kernels and in relation to fractional Brownian motion \citep{Coutin2001,Decreusefond2002,Wang2008} and for equations driven by general semi-martingales \citep{Protter1985}. We also mention \citep{Jaber2017} for a treatment of Volterra processes with the state and space dependence of affine form. The recent paper \citep{AOY} deals with optimal stopping of stochastic Volterra integral equations. Stochastic Volterra integral equations in a random field setting and driven by a L\'evy basis have been considered in  \citep{Chong2017} and \citep{PHAM20183082}.

In this paper we demonstrate how existence results for a class of first order stochastic partial differential equations (SPDE) can be used to derive solutions for stochastic Volterra integral
equations of the form (the precise assumptions will be introduced below)
\begin{equation}\label{SVIE:Intro}
X(t)=x_0(t)+\int_0^t\mu(t,s,X(s))\,ds+\int_0^t\sigma(t,s,X(s-))\,dL(s)\,,
\end{equation}
with $X$ in a general separable Hilbert space $\U$ and the driving L\'evy process $L$ in another Hilbert space $\V$. Our approach rests on the observation that $x\mapsto \sigma(s+x,s,X(s-))$ can be considered as an element in a space of functions $\Hi$ mapping from $\mathbb{R}_+$, the non-negative real numbers, to $\U$. We then consider an SPDE involving the derivative operator $\partial_x$ in a way such that in the mild solution of this SPDE the shift operator (which is generated by $\partial_x$) ensures that the boundary at zero is driven by integrands of the form $\sigma(t,s,X(s-))$ and allows to retrieve the SVIE. This way the Volterra equation arises as a boundary solution to an SPDE with values in $\Hi$ and required properties for $\sigma$ are encoded in the function space $\Hi$ and properties of the shift semigroup defined on $\Hi$. In addition to showing existence of solution this method then allows us to provide results on the existence of invariant measures for the Volterra equations derived from tailor made abstract results about invariant measures of SPDEs.

The connection between Volterra dynamics and SPDEs defined on some function space is not new
in light of mild solutions of SPDEs on Hilbert space. In \citep{BE} a lifting of L\'evy semistationary processes (see 
\citep{BNBV-book}) to solutions of SPDEs 
has been utilized to develop numerical schemes
for Monte Carlo simulations of paths.  
Furthermore, as shown in a Brownian setting in \citep{ZHANG2010}, Volterra solutions can be lifted to construct mild solutions for SPDEs. 

{\bf Contribution and outline:} In Section~\ref{SPDEVolterraConnection} we analyze the above outlined embedding method systematically and derive abstract results for the conditions on the space $\Hi$ needed in order to retrieve the SVIE as a boundary solution of the SPDE. To our knowledge this is the first time that the method itself has been analyzed and treated in this generality. Understanding precisely this connection is not only of interest in itself but also relevant for applications. For example in energy markets it is natural to model the forward curve as a solution to a first order stochastic partial differential equation. The electricity spot price is then given as the boundary solution and solves a SVIE. As mentioned above, for numerical approximation of an SVIE the SPDE formulation is crucial and it is thus important to know when such formulation exists. In Section~\ref{Inv:distrib} we state conditions that ensure existence of a limiting measure for the SVIE. The question of existence of a limiting distribution is relevant in particular for applications and has not been considered for Hilbert space valued SVIE before.  In Section~\ref{examples:H} then we give an example for a possible specification of the function space $\Hi$ and derive required properties of the shift semigroup and related operators on this space. With this example at hand the solution to the Stochastic Volterra integral equation~\ref{SVIE:Intro} follows directly from recent existence results for SPDEs (see \citep{T2012} and \citep{FTT2010}) and extends previous existence results for Stochastic Volterra integral equations. Further, with this example at hand we use the abstract results from Section~\ref{Inv:distrib} to provide more specific conditions for the existence of a limiting measure both, in terms of the SPDE and the SVIE coefficients.

\section{An SPDE representation}\label{SPDEVolterraConnection}
Let $(\Omega,\mathcal{F},(\mathcal{F}_t)_{t\geq 0}, P)$ be a filtered probability space satisfying the usual conditions. Let $\U$ and $\V$ be separable Hilbert spaces. 
Let further $L$ be a square integrable \Levy process in $\V$ with $\E[L(t)]=0$ for all $t\geq 0$ and with characteristic triplet equal to $(\alpha,\mathcal{Q}_0,\nu)$ in the sense of \citep[Definition 4.28]{PZ}. Here $\mathcal{Q}_0 \in L_1^+ (\mathcal{V})$ is the covariance operator of a Wiener process, where $L_1^+ (\mathcal{V})$ is the class of non-negative trace class operators, $\nu$ is the \Levy measure of $L$ and $\alpha$ the drift. We refer the reader to \citep{PZ} for the definition of Hilbert space valued \Levy processes.

We introduce some further notations, for Hilbert spaces $\mathcal{X}$ and $\mathcal{Y}$ and $(e_k)_{k\in \mathbb{N}}$ an orthonormal basis of $\mathcal{X}$ we denote by $L (\mathcal{X},\mathcal{Y})$ the bounded linear operators from $\mathcal{X}$ to $\mathcal{Y}$ and by $L_2 (\mathcal{X},\mathcal{Y})$ the space of Hilbert Schmidt operators from $\mathcal{X}$ to $\mathcal{Y}$, i.e.
\begin{align*}
L_2 (\mathcal{X},\mathcal{Y}) := \{R \in L (\mathcal{X},\mathcal{Y}) : \norm{R}_{L_2(\mathcal{X},\mathcal{Y})}  < \infty \},
\end{align*}
where $\norm{R}_{L_2(\mathcal{X},\mathcal{Y})} :=\sum_{k\in \mathbb{N}} \norm{R e_k}_{\mathcal{Y}}^2$ is the Hilbert-Schmidt norm.
We shall further denote by $\norm{R}_{\mathrm{op}}$ the usual operator norm of $R\in L(\mathcal{X},\mathcal{Y})$ whenever the involved spaces are clear from the context. We remark that while we are mainly interested in all involved spaces to be of infinite dimensions all over results also cover the finite dimensional case.

We are concerned with adapted \cadlag solutions to the following SVIE
\begin{equation}\label{V:SDE}
X(t)=x_0(t)+\int_0^t\mu(t,s,X(s))\,ds+\int_0^t\sigma(t,s,X(s-))\,dL(s)\,,
\end{equation}
where $x_0$ is a $\mathcal U$-valued $\mathcal F_0$-measurable stochastic process, $\mu (t,s,\cdot ):\U \rightarrow \U$ and $\sigma (t,s,\cdot): \U \rightarrow L_2(\V,\U)$ are parameter functions satisfying conditions which we state below.

The solution to the above SVIE will arise as a boundary solution of a process $Y$ that lives in a larger (function) space $\Hi$. For this let  $\Hi$ be a separable Hilbert space of measurable functions $h:\mathbb R_+ \rightarrow \U $, where we use the notation $\mathbb R_+$ for the non-negative real numbers. For $t\geq 0$ denote by $\delta_t:\Hi \rightarrow \U $ the evaluation map given by $\delta_t : h \mapsto h(t)$. In order to have $\delta_t$ well defined it is crucial here that each element $h\in \Hi$ is really a function and not an equivalence class of functions which might render the definition of $\delta_t$ arbitrary. The following Assumption on $\Hi$ will be needed for our main existence result Theorem~\ref{Volterra:sol}.

\begin{assumption}\label{assump:1:space}
The function space $\Hi$ is such that 
\begin{itemize}
\item the evaluation map $\delta_0: h\mapsto h(0)$ is a bounded linear operator, 
\item the set $\{u\in\U: u =\delta_0f, f\in\Hi \text{ constant function}\}$ is closed in $\U$ and
\item the translation operator $\mathcal{S}_t:h\mapsto h(\cdot+t)$ for $t\geq 0$ is well defined and $(\mathcal{S}_t)_{t\geq 0}$ is a $C_0$-semigroup in $\Hi$ and we denote its generator by $\partial_x$ or $\partial/\partial_x$. Furthermore, $(\mathcal{S}_t)_{t\geq 0}$ is quasi-contractive, i.e.
\begin{equation*}
\norm{\mathcal S_t}_{\mathrm{op}} \leq e^{\omega t}, \forall t \geq 0
\end{equation*}
for some $\omega \in \mathbb{R}$. 
\end{itemize}
\end{assumption}
We remark that Assumption~\ref{assump:1:space} implies continuity for the evaluation maps $\delta_t = \mathcal S_t\delta_0$ for any $t\geq 0$. We assume for the rest of this section that Assumption~\ref{assump:1:space} holds and we provide an example of a specific space satisfying the assumption in Section~\ref{examples:H}.

The name $\partial_x$ for the generator is motivated by the fact that for a function $f$ in the domain of $\partial_x$ we have
 $$ \partial_xf(t) = \delta_t\lim_{r\searrow 0}\frac{\mathcal S_rf-f}{r} = \lim_{r\searrow 0}\frac{f(r+t)-f(t)}{r},\quad t\geq0 $$
 from which we see that $\partial_x$ computes the right-derivative.

We also like to remark that the closedness condition for the constant functions in $\Hi$ is satisfied under any of the following conditions:
\begin{enumerate}
  \item All constant functions are contained in $\mathcal H$,
  \item $0$ is the only constant function contained in $\mathcal H$ or
  \item $\mathcal U$ is finite dimensional.
\end{enumerate}
The reason for the closedness assumptions is to allow to embed $\mathcal U$ into $\Hi$ or, more precisely, into an enlargement of $\Hi$. This can be particularly useful as many classical spaces of functions on $\mathbb R_+$ do not include constant functions who will however be crucial in proving existence of a solution to (\ref{V:SDE}). In this case we can use the following Lemma to enrich these spaces.

\begin{lemma}\label{l:constant functions}
  There is a Hilbert space $\mathcal H^+$ which contains $\mathcal H$ as a closed subspace, which satisfies Assumption \ref{assump:1:space} and such that there is a continuous linear map $\pi:\mathcal U\rightarrow\mathcal H^+$ with $\pi u(t)=u$ for any $u\in \mathcal U$, $t\geq 0$.
\end{lemma}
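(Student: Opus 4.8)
The plan is to take $\mathcal H^+$ to be the orthogonal direct sum of $\Hi$ with a copy of the constant functions that are \emph{not} already present in $\Hi$, and to realise an abstract pair $(h,w)$ as a genuine function via $t\mapsto h(t)+w$. First I would fix notation: let $\U_0:=\{u\in\U : u=\delta_0 f,\ f\in\Hi \text{ constant}\}$, which is closed in $\U$ by Assumption~\ref{assump:1:space}, and let $W:=\U_0^{\perp}$, so that $\U=\U_0\oplus W$ orthogonally. Write $\mathcal C\subseteq\Hi$ for the set of constant functions. A preliminary step is to check that $\mathcal C$ is a \emph{closed} subspace of $\Hi$: if $f_n\in\mathcal C$ with $f_n\to f$ in $\Hi$, then using that $\delta_t=\mathcal S_t\delta_0$ is continuous for every $t$, the values $\delta_t f=\lim_n\delta_t f_n=\lim_n\delta_0 f_n=\delta_0 f$ do not depend on $t$, whence $f\in\mathcal C$. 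Consequently $\delta_0|_{\mathcal C}:\mathcal C\to\U_0$ is a continuous linear bijection between Hilbert spaces, and by the open mapping theorem its inverse $\iota:=(\delta_0|_{\mathcal C})^{-1}:\U_0\to\mathcal C$ is bounded.

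Next I would carry out the construction. Define $\mathcal H^+:=\Hi\oplus W$ with inner product $\langle(h_1,w_1),(h_2,w_2)\rangle:=\langle h_1,h_2\rangle_\Hi+\langle w_1,w_2\rangle_\U$, and identify $(h,w)$ with the function $t\mapsto h(t)+w$. I would verify that this identification is injective, so that $\mathcal H^+$ is again an honest space of $\U$-valued functions on $\R_+$ (which is what makes the evaluation maps well defined): if $h(t)+w\equiv 0$ then $h\equiv -w$ is constant, so $-w\in\U_0\cap W=\{0\}$, giving $w=0$ and $h=0$. Under this identification $\Hi\cong\Hi\oplus\{0\}$ is a closed subspace of $\mathcal H^+$ and the embedding preserves functions, giving the first assertion of the lemma.

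Then I would check Assumption~\ref{assump:1:space} for $\mathcal H^+$ and define $\pi$. Set $\delta_0^+(h,w):=\delta_0 h+w$ and $\mathcal S_t^+(h,w):=(\mathcal S_t h,w)$; the latter is consistent with translation since shifting a constant leaves it unchanged. Boundedness of $\delta_0^+$ is immediate; every constant function now lies in $\mathcal H^+$ (the value $\delta_0 h+w$ ranges over all of $\U=\U_0\oplus W$), so the closedness requirement becomes the trivial statement that $\U$ is closed; $(\mathcal S_t^+)_{t\ge 0}$ is a $C_0$-semigroup because it acts as $\mathcal S_t$ on the first coordinate and as the identity on $W$; and from $\norm{\mathcal S_t^+(h,w)}^2=\norm{\mathcal S_t h}_\Hi^2+\norm{w}_\U^2\le e^{2\omega t}\norm{h}_\Hi^2+\norm{w}_\U^2$ one reads off quasi-contractivity with constant $\omega^+:=\max(\omega,0)$, i.e. $\norm{\mathcal S_t^+}_{\mathrm{op}}\le e^{\omega^+ t}$. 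Finally I would define $\pi u:=(\iota u_0,w)$, where $u=u_0+w$ is the decomposition along $\U=\U_0\oplus W$; then $\pi u$ is exactly the constant function with value $u$, so $\pi u(t)=u$, and $\norm{\pi u}_{\mathcal H^+}^2=\norm{\iota u_0}_\Hi^2+\norm{w}_\U^2\le(\norm{\iota}_{\mathrm{op}}^2+1)\norm{u}_\U^2$, which gives continuity of $\pi$.

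The only non-formal point, and the hard part, is the boundedness of $\pi$ (equivalently of $\iota$): everything else is a routine verification of the semigroup and direct-sum structure. This boundedness rests on the open mapping theorem and hence on \emph{both} closedness facts, namely the hypothesis that $\U_0$ is closed in $\U$ and the closedness of $\mathcal C$ in $\Hi$ established in the first step; without the latter, $\delta_0|_{\mathcal C}$ would not be a bijection of Hilbert spaces and the inverse could fail to be bounded.
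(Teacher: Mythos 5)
Your proposal is correct and follows essentially the same route as the paper: you enlarge $\Hi$ by the orthogonal complement of the constants already present (your $W=\U_0^\perp$ is exactly the paper's $\mathcal B$, transported to $\U$ via the isometry $u\mapsto(t\mapsto u)$), you establish the same two closedness facts (closedness of $\U_0$ by hypothesis and of $\mathcal C$ in $\Hi$ via the continuous, separating point evaluations), and you verify Assumption~\ref{assump:1:space} by the same orthogonality computation for $\mathcal S_t$. The only cosmetic difference is that you obtain continuity of $\pi$ from the open mapping theorem applied to $\delta_0|_{\mathcal C}$, while the paper invokes the closed graph theorem for $\pi=(\delta_0|_{\mathcal P})^{-1}$ --- equivalent tools applied to the same operator.
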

\begin{proof}
 Let $\mathcal P$ be the set of constant functions from $\mathbb R_+$ to $\mathcal U$ and $\|f\|_\mathcal P:=\|f(0)\|_{\mathcal U}$ be the push-forward norm $\pi:\U\rightarrow \mathcal P,u\mapsto (t\mapsto u)$. Note that $\pi$ as a mapping to $\mathcal P$ is a bijective isometry from $(\U,\|\cdot\|_\U)$ to $(\mathcal P\|\cdot\|_{\mathcal P})$ by construction.
 Let $\mathcal C:=\mathcal P\cap \mathcal H = \pi(\delta_0(\mathcal P\cap \mathcal H))$ which is closed in $\mathcal P$ because $\delta_0(\mathcal P\cap \mathcal H)$ is closed in $\U$ by assumption. Also, note that $\mathcal C$ is closed in $(\Hi,\|\cdot\|_\Hi)$ because it is the set of constant functions in $\Hi$ and the point evaluations $(\delta_t)_{t\geq 0}$ are continuous and separating. Let $\mathcal B$ be the orthogonal complement of $\mathcal C$ in $(\mathcal P\|\cdot\|_{\mathcal P})$ and define
  $$ \mathcal H^+ := \Hi \oplus \mathcal B. $$
 We define the norm
  $$ \|h+b\|^2_{\Hi^+} := \|h\|^2_{\Hi} + \|b\|^2_{\mathcal P},\quad h+b \in \mathcal H\oplus\mathcal B. $$
  Then $(\Hi^+,\|\cdot \|_{\Hi^+})$ is a Hilbert space and $\mathcal H$, $\mathcal B$ are orthogonal complements by construction. For $h+b \in \mathcal H\oplus\mathcal B$ we have 
   $$ \|\delta_0(h+b)\|^2_{\mathcal U} \leq 2\|h(0)\|_{\mathcal U}^2+2\|b(0)\|^2_{\mathcal U} \leq 2(\|\delta_0\|^2+1)\|h+b\|^2_{\Hi^+} $$
   where we used orthogonality for the last inequality. Thus, $\delta_0$ is a bounded linear operator and its range is $\mathcal U$ which is closed. Since $\delta_0|_{\mathcal P}$ is bounded relative to the $\|\cdot\|_{\Hi^+}$-norm we find that its inverse $\pi$ has a closed graph. The closed graph theorem yields continuity of $\pi$.
   
   Now it remains to see that $\Hi^+$ satisfies Assumption \ref{assump:1:space}. We already proved continuity of $\delta_0$. The set $\{u\in \U:u=\delta_0 f, f\in\Hi^+\text{ constant function}\} = \U$ by construction of $\Hi^+$. We now inspect the behaviour of the shift semigroup $(\mathcal S_t)_{t\geq 0}$. Since the functions $b\in\mathcal B$ are constant we find that $\mathcal S_tb=b$ for all $t\geq 0$. Also, for $h+b\in\Hi\oplus\mathcal B$ we have $\mathcal S_th \in\mathcal H$ and, hence, it is orthogonal to $b = \mathcal S_tb$. For this reason we find
    $$ \|\mathcal S_t(h+b)\|^2_{\Hi^+} = \|\mathcal S_th+b\|^2_{\Hi^+} = \|\mathcal S_th\|^2_{\Hi^+}+\|b\|^2_{\Hi^+} \leq \max\{1,\|\mathcal S_t\|_{\mathrm{op}}^2\}\|h+b\|^2_{\Hi^+}.$$
    Thus, $(\mathcal S_t)_{t\geq 0}$ is a quasi-contractive semigroup and we have
     $$ \mathcal S_t(h+b) = \mathcal S_th + b \rightarrow h+b,\quad t\searrow 0 $$
    and, hence, $(\mathcal S_t)_{t\geq 0}$ is a $C_0$-semigroup on $\Hi^+$.
\end{proof}

In order to make sense out of the Volterra Equation \eqref{V:SDE} we need some more assumptions.

\begin{assumption}\label{assump:2:functions}
The coefficient functions $\mu,\sigma$ are such that
\begin{itemize}
\item for each fixed $(t,u)\in\R_+\times\U$, the functions $x\mapsto\mu(t+x,t,u)$ and $x\mapsto\sigma(t+x,t,u)$ are elements of $\Hi$ and $L(\V,\Hi)$ respectively,
\item the mappings $\mathbb R_+\times\mathcal U\ni (t,u)\mapsto \mu(t+\cdot,t,u)\in\mathcal H$ and 
$\mathbb R_+\times\mathcal U\ni(t,u)\mapsto\sigma(t+\cdot,t,u)\in L(\mathcal V,\mathcal H)$ are measurable.
\end{itemize}
\end{assumption}
We now define the functions $a:\R_+\times\Hi \rightarrow\Hi$ and $b:\R_+\times\Hi \rightarrow L(\V, \Hi)$ by
\begin{align}
a(t,h)&=\mu(t+\cdot,t,\delta_0 h), \label{def-a}\\
b(t,h)&=\sigma(t+\cdot,t,\delta_0 h) \label{def-b}\,.
\end{align}
Continuity of $\delta_0$ and Assumption~\ref{assump:2:functions} yield that $a:\mathbb R_+\times\mathcal H\rightarrow\mathcal H$ and $b:\mathbb R_+\times\mathcal H\rightarrow
L(\mathcal V,\mathcal H)$ are measurable functions.

Related to the SVIE is the following class of first order SPDEs
\begin{equation}
\label{eq:hyp-spde}
dY(t)=\left(\partial_xY(t)+a(t,Y(t))\right)\,dt+b(t,Y(t))\,dL(t)\,,
\end{equation}
with $Y(0)$ being given as an $\mathcal F_0$-measurable $\Hi$-valued random variable.

We shall need some standard Lipschitz and growth conditions.
\begin{assumption}\label{Ass:Lip}
We say that functions $a:\R_+\times\Hi \rightarrow\Hi$ and $b:\R_+\times\Hi \rightarrow L(\V, \Hi)$ fulfil a {\em Lipschitz and linear growth condition}, if there exist measurable functions $L_{a}, L_{b},K_a, K_b: \mathbb{R}_
+ \rightarrow \mathbb{R}_+$ which are bounded on compacts and such that 
\begin{align}
\norm{a(t,h_1) - a(t,h_2)}_{\Hi} &\leq   L_a(t) \norm{h_1-h_2}_{\Hi}\label{lip:a}\\
\norm{b(t,h_1) - b(t,h_2)}_{\mathrm{op}} &\leq  L_b(t) \norm{h_1-h_2}_{\Hi}\label{lip:b}
\end{align}
for all $t\in\mathbb R_+$ and all $h_1,h_2 \in \Hi$ and moreover 
\begin{align}
\norm{a(t,{\bf 0_{\Hi}})}_{\Hi} &\leq   K_a (t)\label{grows:a}\\
\norm{b(t,{\bf 0_{\Hi}})}_{\mathrm{op}} &\leq  K_b (t) \label{grows:b}
\end{align}
for all $t\in \mathbb{R_+}$ and $h \in \Hi$. By ${\bf 0_{\Hi}}$ we mean the zero element of $\Hi$, i.e.\ the function which is constant zero.
\end{assumption}
Note that from the above assumption it follows directly by the triangular inequality that $\norm{a(t,h)}_{\Hi} \leq L_a(t)  \norm{h}_{\Hi}  + K_a (t) $ and $\norm{b(t,h)}_{\mathrm{op}} \leq  L_b(t)  \norm{h}_{\Hi}  + K_b (t) $, which explains the name {\em linear growth condition}. Of course, when looking for solutions for (\ref{V:SDE}) it is more natural to state assumptions on the functions $\mu$ and $\sigma$ directly and we will do so for a particular choice of $\Hi$ in Section~\ref{param:conditions}.

By \citep[Theorem 8.8]{FTT2010} 
under the linear growth and Lipschitz condition, for every $\Hi$-valued $\mathcal F_0$-measurable square-integrable random variable $x_0$ there exists a unique mild solution of \eqref{eq:hyp-spde} given by the integral equation 
\begin{equation}\label{SPDE-sol}
Y(t)=\mathcal{S}_t Y(0)+\int_0^t\mathcal{S}_{t-s}a(s,Y(s))\,ds+\int_0^t\mathcal{S}_{t-s}b(s,Y(s-))\,dL(s)
\end{equation}
with $Y(0)=x_0$ and this solution $Y$ is \cadlag and adapted. The first main result of the paper follows now and shows that the boundary of this solution solves the SVIE in Eq.~\eqref{V:SDE}.

\begin{theorem}\label{Volterra:sol}
Suppose that the coefficient functions satisfy Assumption \ref{assump:2:functions} and that $x_0$ is an $\Hi$-valued $\mathcal F_0$-measurable square-integrable random variable. Let $a$ and $b$ be as defined in \eqref{def-a} and \eqref{def-b}, and assume that the functions fulfil Assumption~\ref{Ass:Lip}. Then there is a unique adapted \cadlag solution $X$ to the SVIE \eqref{V:SDE}. Moreover, this solution satisfies
 $$ \E [\sup_{t\in [0,T]} \norm{X(t)}^2_{\mathcal{U}} ] < \infty $$
 for any $T>0$ and it is given by $X(t) = \delta_0Y(t)$ where $Y$ is the solution to the SPDE \eqref{eq:hyp-spde} with $Y(0)=x_0$. 
\end{theorem}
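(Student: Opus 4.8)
The plan is to obtain the solution of the SVIE as the trace at $0$ of the mild solution of the SPDE, exploiting that the shift semigroup encodes the Volterra time-dependence. Concretely, under the stated hypotheses the cited existence result \citep[Theorem 8.8]{FTT2010} provides a unique adapted \cadlag square-integrable mild solution $Y$ of \eqref{eq:hyp-spde} with $Y(0)=x_0$, represented by \eqref{SPDE-sol}. I would then \emph{define} $X(t):=\delta_0 Y(t)$ and verify that it solves \eqref{V:SDE}. Since $\delta_0$ is a bounded linear operator (Assumption~\ref{assump:1:space}) and $Y$ is \cadlag and adapted in $\Hi$, the process $X$ is automatically \cadlag and adapted in $\U$, and the moment bound is immediate from
$$ \E\Big[\sup_{t\in[0,T]}\norm{X(t)}_{\U}^2\Big]\le \norm{\delta_0}_{\mathrm{op}}^2\,\E\Big[\sup_{t\in[0,T]}\norm{Y(t)}_{\Hi}^2\Big]<\infty, $$
the finiteness of the $\Hi$-valued supremum being part of the mild-solution theory.

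The heart of the existence argument is to apply $\delta_0$ to the mild representation \eqref{SPDE-sol}. The key identity is $\delta_0\mathcal S_{r}=\delta_{r}$, which follows directly from $\delta_0\mathcal S_r h=(\mathcal S_r h)(0)=h(r)=\delta_r h$. Because $\delta_0$ is bounded it commutes with the Bochner integral and, applied to the $\Hi$-valued stochastic integral, with the stochastic integral against $L$; the resulting $\U$-valued integrand is admissible since $\delta_{t-s}b(s,Y(s-))=\sigma(t,s,X(s-))\in L_2(\V,\U)$ by the hypothesis on $\sigma$. Using $\delta_0\mathcal S_t Y(0)=\delta_t x_0=x_0(t)$ (identifying the initial datum $x_0\in\Hi$ with the function $t\mapsto x_0(t)$), and evaluating the lifted coefficients via \eqref{def-a} and \eqref{def-b},
$$ \delta_{t-s}a(s,Y(s))=\mu\big(s+(t-s),s,\delta_0 Y(s)\big)=\mu(t,s,X(s)), $$
and likewise $\delta_{t-s}b(s,Y(s-))=\sigma(t,s,X(s-))$ — here using that continuity of $\delta_0$ gives $\delta_0 Y(s-)=X(s-)$ — one recovers exactly
$$ X(t)=x_0(t)+\int_0^t\mu(t,s,X(s))\,ds+\int_0^t\sigma(t,s,X(s-))\,dL(s), $$
that is, \eqref{V:SDE}.

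For uniqueness I would run the correspondence in reverse. Given any adapted \cadlag solution $\tilde X$ of \eqref{V:SDE}, define the $\Hi$-valued process
$$ \tilde Y(t):=\mathcal S_t x_0+\int_0^t\mathcal S_{t-s}\,\mu(s+\cdot,s,\tilde X(s))\,ds+\int_0^t\mathcal S_{t-s}\,\sigma(s+\cdot,s,\tilde X(s-))\,dL(s), $$
which is well defined and predictable by Assumption~\ref{assump:2:functions} together with the adaptedness of $\tilde X$. Applying $\delta_0$ exactly as above and invoking the SVIE satisfied by $\tilde X$ yields $\delta_0\tilde Y(t)=\tilde X(t)$; substituting this back into the definitions \eqref{def-a} and \eqref{def-b} shows $\mu(s+\cdot,s,\tilde X(s))=a(s,\tilde Y(s))$ and $\sigma(s+\cdot,s,\tilde X(s-))=b(s,\tilde Y(s-))$, so that $\tilde Y$ solves the mild equation \eqref{SPDE-sol}. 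By uniqueness of the SPDE mild solution, $\tilde Y=Y$, whence $\tilde X=\delta_0\tilde Y=\delta_0 Y=X$.

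I expect the main obstacle to be the rigorous interchange of $\delta_0$ with the stochastic integral together with the attendant admissibility bookkeeping: one must check that the lifted integrands $s\mapsto\mathcal S_{t-s}b(s,Y(s-))$ are genuine $\Hi$-valued stochastic integrands so that \eqref{SPDE-sol} is meaningful, that $\delta_0$ sends them to the $L_2(\V,\U)$-valued integrands of \eqref{V:SDE}, and that the commutation is licensed for each fixed upper limit $t$ despite the integrand's dependence on $t$ through $\mathcal S_{t-s}$. A secondary technical point is integrability in the uniqueness step: to guarantee $\tilde Y$ is square-integrable for a competing solution carrying no a priori moment control, I would localize along the stopping times $\tau_n=\inf\{t:\norm{\tilde X(t)}_{\U}>n\}$, establish the identity on each $[0,\tau_n]$, and pass to the limit $n\to\infty$.
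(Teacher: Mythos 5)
Your existence half coincides with the paper's own Step~1: apply the bounded operator $\delta_0$ to the mild representation \eqref{SPDE-sol}, push it through the Bochner and stochastic integrals, use $\delta_0\mathcal S_r=\delta_r$ together with \eqref{def-a}--\eqref{def-b}, and read off the moment bound and path regularity from \citep[Theorem 8.8]{FTT2010} and \citep[Theorem 4.5.(1)]{T2012}. The uniqueness half, however, has two genuine gaps. First, your localization plan tacitly assumes that stopping at $\tau_n=\inf\{t:\norm{\tilde X(t)}_\U>n\}$ bounds the lifted integrands $s\mapsto\mu(s+\cdot,s,\tilde X(s))$ in $\Hi$ and $s\mapsto\sigma(s+\cdot,s,\tilde X(s-))$ in $L(\V,\Hi)$, so that $\tilde Y$ is well defined. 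Under the stated hypotheses this does not follow: Assumption~\ref{assump:2:functions} gives only \emph{measurability} in $u$, and the Lipschitz/growth bounds of Assumption~\ref{Ass:Lip} control $a(t,h_1)-a(t,h_2)$ only for arguments $h_1,h_2\in\Hi$; a generic $u\in\U$ need not equal $\delta_0h$ for any $h\in\Hi$ of controlled norm, since constant functions need not belong to $\Hi$. The paper closes exactly this hole with Lemma~\ref{l:constant functions}: it enlarges $\Hi$ so that the embedding $\pi:u\mapsto(t\mapsto u)$ is continuous, writes the integrands as $\bar a(s,\pi\tilde X(s))$ and $\bar b(s,\pi\tilde X(s-))$, and only then obtains boundedness on $[0,\tau_n]$. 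Without this device your $\tilde Y$ may simply fail to exist.

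Second, and more seriously, you offer no mechanism for the stopped identification of $\tilde Y$ with $Y$. The uniqueness in \citep[Theorem 8.8]{FTT2010} holds within a class of square-integrable solutions, and the object you would need to stop, $\int_0^t\mathcal S_{t-s}b(s,\cdot)\,dL(s)$, is a stochastic \emph{convolution}, not a martingale: replacing $t$ by $t\wedge\tau_n$ puts the random operator $\mathcal S_{t\wedge\tau_n-s}$ inside the integral, and optional stopping and the It\^o isometry do not apply to it. This is precisely why the paper invokes the Sz.-Nagy dilation \citep[Theorem 1.8.1]{SN}: the quasi-contractive semigroup is dilated to a $C_0$-group $\bS$ on a larger space $\bar\Hi$ with $\mathcal S_t=\Gamma_\Hi(\bS_t)|_\Hi$, so the convolution factors as $\bS_t$ applied to a genuine stochastic integral $\int_0^t\bS_{-s}(\cdots)\,dL(s)$ whose integrand is $t$-independent; that integral can be stopped at $\tau_N$, the identity $1_{A_N}\bS_{t\wedge\tau_N}=1_{A_N}\bS_t$ yields $\bar\delta_0Z_N(t)1_{A_N}=X(t)1_{A_N}$, and $Z_N$ is then recognized as the stopped mild solution $Y(\cdot\wedge\tau_N)$, after which monotone convergence finishes the proof. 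Your phrase ``establish the identity on each $[0,\tau_n]$ and pass to the limit'' names this difficulty without resolving it; a complete argument needs the group dilation or a substitute (say, a Gronwall estimate exploiting the local property of stochastic integrals, which again requires the $\pi$-embedding from the first point to turn the $\Hi$-Lipschitz bounds into $\U$-bounds). A minor further lacuna: your $\tilde Y$ is defined only for each fixed $t$ almost surely, so the joint measurability you assert and the left limits $\tilde Y(s-)$ you feed into $b$ are unjustified; the paper sidesteps this because its coefficients depend only on $\bar\delta_0$ of the inserted process, which equals the c\`adl\`ag process $X$.
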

\begin{proof}
 {\em Step 1; construction of a solution:}
  First we observe that as $Y(0)=x_0$, $Y(0)$ is an $\mathcal F_0$-measurable square integrable random variable with values in $\Hi$ from the assumption on $x_0$. We define $X(t):=\delta_0Y(t)$ and apply $\delta_0$ to the representation \eqref{SPDE-sol}. Note that continuous linear operators can always be pushed into Bochner integrals and the stochastic integral \citep[Proposition 3.15(ii), Theorem 8.7(v)]{PZ}. Hence, we find that
  \begin{align*}
      X(t) &= \delta_0(\mathcal S_tY(0)) + \int_0^t \delta_0\mathcal S_{t-s}a(s,Y(s))ds+\int_0^t \delta_0\mathcal S_{t-s}b(s,Y(s-))dL(s)\\
           &= x_0(t) + \int_0^t \mu(t,s,X(s))ds + \int_0^t \sigma(t,s,X(s-)) dL(s) 
  \end{align*}
  for any $t\geq 0$. Thus, $X$ is a solution to the SVIE \eqref{V:SDE}. By \citep[Theorem 8.8]{FTT2010} the solution $Y$ to the SPDE \eqref{eq:hyp-spde} satisfies
  \begin{equation}\label{square:int:sol}
\E [ \sup_{t \in [0,T]} \norm{Y(t)}_{\Hi}^2]<\infty
\end{equation}
for any $T>0$. Hence, we find
 $$ \E [ \sup_{t \in [0,T]} \norm{X(t)}_{\mathcal U}^2] \leq \|\delta_0\|^2_{\mathrm{op}} \E [ \sup_{t \in [0,T]} \norm{Y(t)}_{\Hi}^2]<\infty $$
 for any $T>0$ which proves that there is a solution with the required integrability condition. According to \citep[Theorem 4.5. (1)]{T2012} $Y$ has \cadlag paths and, hence, $X$ has \cadlag paths.
 
 {\em Step 2; uniqueness of solutions:}
For the remainder of the proof, let $X$ be any adapted \cadlag solution to the SVIE \eqref{V:SDE}. We define the stopping times $\tau_N:=\inf\{t\geq 0: \|X(t)\|_\U\geq N\}$ for any $N\geq 0$ and note that $\tau_N\rightarrow\infty$ for $N\rightarrow\infty$ due to the path property of $X$. By Lemma \ref{l:constant functions} we may assume that the embedding $\pi:\U\rightarrow \Hi,u\mapsto (t\mapsto u)$ is an everywhere defined continuous linear operator (where we might possibly have to replace $\Hi$ by a larger space). \citep[Theorem 1.8.1]{SN} yields that there is a Hilbert space $\bar\Hi$ which contains $\Hi$ as a closed subspace, its norm restricted to $\Hi$ is the norm of $\Hi$, and such that there is a $C_0$-group $\bS$ defined on $\bar\Hi$ such that
 $$ \mathcal S_t = \Gamma_\Hi(\bS_t)|_{\Hi},\quad t\geq 0$$
where $\Gamma_\Hi:\bar\Hi\rightarrow\Hi$ is the orthogonal projection. We also use the notations $\bar\delta_0 := \delta_0\Gamma_\Hi$, $\bar a(s,h) := \mu(s+\cdot,s,\bar\delta_0 h)$ and $\bar b(s,h) := \sigma(s+\cdot,s,\bar\delta_0 h)$ for $s\geq 0$, $h\in\bar\Hi$. Note that $\bar a$, $\bar b$ have values in $\Hi$. Define 
$$ Z_N(t) := \bS_{t\wedge\tau_N}x_0 + \bS_{t\wedge\tau_N}\int_0^{t\wedge\tau_N} \bS_{-s}\bar a(s,\pi X(s))ds + \bS_{t\wedge\tau_N}\int_0^{t\wedge\tau_N} \bS_{-s}\bar b(s,\pi X(s-))dL(s),\quad t\geq 0$$
  where the integrals exist because the integrands are bounded. Fix $t\geq 0$ and define $A_N:=\{t < \tau_N\}$. We find
  \begin{align*}
     Z_N(t) 1_{A_N} &= 1_{A_N}\bS_{t\wedge\tau_N}x_0 + 1_{A_N}\bS_{t\wedge\tau_N}\int_0^{t\wedge\tau_N} \bS_{-s}\bar a(s,\pi X(s))ds \\
     &\quad+ 1_{A_N}\bS_{t\wedge\tau_N}\int_0^{t\wedge\tau_N} \bS_{-s}\bar b(s,\pi X(s-))dL(s) \\
       &= 1_{A_N}\bS_{t}x_0 + 1_{A_N}\bS_{t}\int_0^{t} \bS_{-s}\bar a(s,\pi X(s))ds + 1_{A_N}\bS_{t}\int_0^{t} \bS_{-s}\bar b(s,\pi X(s-))dL(s) \\
       &= \left(\bS_{t}x_0 + \int_0^{t} \bS_{t-s}\bar a(s,\pi X(s))ds + \int_0^{t} \bS_{t-s}\bar b(s,\pi X(s-))dL(s)\right)1_{A_N}
  \end{align*}
  and, hence, 
   $$ \bar\delta_0Z_N(t)1_{A_N} = X(t)1_{A_N}. $$
  Since the value of $a,b$ depend only on the initial value of the inserted function we find that
 $$ Z_N(t) := \bS_{t\wedge\tau_N}x_0 + \bS_{t\wedge\tau_N}\int_0^{t\wedge\tau_N} \bS_{-s}a(s,Z_N(s))ds + \bS_{t\wedge\tau_N}\int_0^{t\wedge\tau_N} \bS_{-s}b(s,Z_N(s-))dL(s),\quad t\geq 0. $$
 Thus, $Z_N$ is the $\tau_N$-stopped solution of the SPDE \eqref{eq:hyp-spde}, i.e.\ $Z_N(t)= Y(t\wedge\tau_N)$ for any $t\geq 0$ where $Y$ is the unique $\bar\Hi$-valued solution of the SPDE \eqref{eq:hyp-spde}. We find that
  $$ \E[\sup_{0\leq s\leq t} \|Z_N(s)\|_{\bar\Hi}^2] \leq \E[\sup_{0\leq s\leq t} \|Y(s)\|_{\bar\Hi}^2] < \infty. $$
  The sequence $(A_N)_{N\in\mathbb N}$ is an increasing and exhausting sequence of sets. The monotone convergence theorem yields 
   \begin{align*}
      \E[\sup_{0\leq s\leq t} \|X(s)\|_{\bar\Hi}^2] &\leftarrow \E[\sup_{0\leq s\leq t} \|X(s)\|_{\U}^21_{A_N}]  \\
       &\leq \| \bar\delta_0 \|^2_{\mathrm{op}}\E[\sup_{0\leq s\leq t}  \|Z_N(s)\|_{\bar\Hi}^21_{A_N}] \\
       &\leq \| \bar\delta_0 \|^2_{\mathrm{op}}\E[\sup_{0\leq s\leq t}   \|Y(s)\|_{\bar\Hi}^21_{A_N}] \\
       &\leq \| \bar\delta_0 \|^2_{\mathrm{op}}\E[\sup_{0\leq s\leq t}  \|Y(s)\|_{\bar\Hi}^2] < \infty.
      \end{align*}
     Thus, we find $\E[\sup_{0\leq s\leq t} \|X(s)\|_{\bar\Hi}^2] < \infty$ and 
      $$X(t) = \lim_{N\rightarrow\infty} X(t) 1_{A_N} = \lim_{N\rightarrow\infty} \bar\delta_0Z_N(t) 1_{A_N} = \lim_{N\rightarrow\infty} \delta_0Y(t\wedge\tau_N) 1_{A_N} = \delta_0Y(t). $$
     Since $X$ and $\delta_0Y$ have \cadlag paths we find that $X=\delta_0Y$.
\end{proof}

\section{Invariant distributions}\label{Inv:distrib}
In this section we investigate existence of an invariant distribution for homogeneous SVIEs, i.e.\ we consider equations of the following type
\begin{equation}\label{V:SDE:hom}
X(t)=x_0+\int_0^t\mu(t-s,X(s))\,ds+\int_0^t\sigma(t-s,X(s-))\,dL(s)\,,
\end{equation}
where $x_0$ is a square-integrable, $\mathcal F_0$-measurable and $\U$-valued random variable, $\mu:\mathbb R_+\times \U\rightarrow \U$ and $\sigma:\mathbb R_+\times U\rightarrow L(\mathcal V,\U)$. This implies that $a$ and $b$ do not depend on time, namely $a(h)=\mu(\cdot,\delta_0h)$ and $b(h) = \sigma(\cdot,\delta_0h)$ for any $h\in \Hi$. We will assume for the remainder of this section that our Assumptions \eqref{assump:2:functions}, \eqref{Ass:Lip} are satisfied and that $L$ is a square integrable L\'evy process with $\E[|L(1)|_\V^2] = 1$ and zero mean.\footnote{If $\mathcal R$ is the RKHS of $L$, cf.\ \citep[Definition 7.2]{PZ}, then $\mathcal R\subseteq \mathcal V$ and for $T\in L(\V,\U)$ one has $\|T\|_{L_2(\mathcal R,\U)} \leq \|T\|_{\mathrm{op}}$.}

We shall focus on abstract results for a generic function space $\Hi$ here. In Section~\ref{examples:H} our Theorems \ref{inv:distribution} and \ref{inv:distribution 2} provide more specific existence results of limiting laws. 

One way to guarantee existence of a limiting measure for the SPDE is to ensure that $\|\mathcal S_t\|_{\mathrm{op}} \leq e^{- \beta t}$ for $\beta>0$ large enough (its required magnitude depending on the Lipschitz coefficients of the SPDE). However, in order to derive the SVIE we need to ensure that $\Hi$ contains constant functions. For constant $h \in \Hi$ the shift operator acts as the identity, i.e. $\mathcal S_t h=h$ and thus $\|\mathcal S_t\|_{\mathrm{op}} \geq 1$ and the standard conditions are not fullfilled. We provide two conditions under which one can still ensure existence of a limiting measure for the SPDE and as a result for the SVIE. The first one captures the case where the coefficients are orthogonal to the subspace generated by constant functions. In this case the influence of the starting value persists in the limiting measure as the coefficient functions leave it untouched. The second results covers a setting where $h \in \Hi$ can be split into an orthogonal sum of two subspaces, one on which $\mathcal S_t$ has nice contraction properties and one on which it is only quasi-contractive but the drift coefficient mean reverts towards $0$ on that subspace. In this case the limiting measure is independent of the starting value.
\begin{proposition}\label{prop:limit}
  Let $\mathcal C \subseteq \{h\in \Hi: \partial_xh=0\}$ such that its orthogonal complement $\mathcal B$ is invariant under the shift semigroup 
  $(\mathcal S_t)_{t\geq 0}$. Further we assume that there is $\alpha>0$, $L_a,L_b>0$ such that
  \begin{enumerate}
     \item[(i)] $a$ is $\mathcal B$-valued and $b$ is $L(\V,\mathcal B)$-valued,
    \item[(ii)] $\|\mathcal S_t|_{\mathcal B}\|_{\mathrm{op}} \leq e^{-\alpha t/2}$,
\item[(iii)] $\norm{a(h_1) - a(h_2)}_{\Hi} \leq   L_a \norm{h_1-h_2}_{\Hi}$, 
\item[(iv)] $\norm{b(h_1) - b(h_2)}_{\mathrm{op}} \leq  L_b \norm{h_1-h_2}_{\Hi}$ and
\item[(v)] $2L_a+L_b^2 < \alpha$
  \end{enumerate}
  for any $h_1,h_2\in\mathcal B$. Then for any $x_0\in \U$ there is a limiting distribution $\nu_{x_0}$ for the solution to the SVIE
  $$ X(t) = x_0 + \int_0^t \mu(t-s,X(s)) ds + \int_0^t \sigma(t-s,X(s)) dL(s), $$
  i.e.\ $X(t) \rightarrow \nu_{x_0}$ in law for $t\rightarrow \infty$.
  
  If $\mathcal C=\{0\}$, then the limiting distribution $\nu$ does not depend on the distribution of $x_0$ and it is an invariant law for $X$.
\end{proposition}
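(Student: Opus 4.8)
\emph{Reduction to the lifted SPDE.} The plan is to transfer the question to the lifted SPDE and exploit a contraction property there. By Theorem~\ref{Volterra:sol}, after embedding $\U$ into $\Hi$ through the continuous map $\pi$ of Lemma~\ref{l:constant functions} (enlarging $\Hi$ if necessary), the solution of the homogeneous SVIE is $X(t)=\delta_0Y(t)$, where $Y$ is the mild solution of the time-homogeneous SPDE \eqref{eq:hyp-spde} with $Y(0)=\pi x_0$. Since $\delta_0$ is bounded and linear, it suffices to show that $Y(t)$ converges in law in $\Hi$ to some measure $\mu_{x_0}$; the continuous mapping theorem then gives $X(t)\to\nu_{x_0}:=\mu_{x_0}\circ\delta_0^{-1}$ in law.

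\emph{Freezing the $\mathcal C$-component.} Let $P_\mathcal C,P_\mathcal B$ denote the orthogonal projections onto $\mathcal C$ and $\mathcal B$. I first show that $P_\mathcal C Y(t)=P_\mathcal C Y(0)$ for all $t\geq0$. Because $\mathcal C\subseteq\{h:\partial_xh=0\}$ we have $\mathcal S_t|_\mathcal C=\mathrm{id}$, and by hypothesis $\mathcal B$ is invariant under $(\mathcal S_t)_{t\geq0}$; as $a$ is $\mathcal B$-valued and $b$ is $L(\V,\mathcal B)$-valued, both integral terms in the mild representation \eqref{SPDE-sol} take values in $\mathcal B$. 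Applying $P_\mathcal C$ to \eqref{SPDE-sol} therefore annihilates these terms and leaves $P_\mathcal C\mathcal S_tY(0)=P_\mathcal C Y(0)$, so the $\mathcal C$-component is frozen at $P_\mathcal C\pi x_0$.

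\emph{Contraction on $\mathcal B$ and the pullback limit.} The crucial estimate is an exponential contraction for the $\mathcal B$-component. For two solutions $Y,\tilde Y$ of \eqref{eq:hyp-spde} driven by the same $L$ and sharing the frozen $\mathcal C$-component, the difference $D=Y-\tilde Y$ is $\mathcal B$-valued. Writing the quasi-contractivity (ii) in its infinitesimal form $\langle h,\partial_xh\rangle\leq-\tfrac{\alpha}{2}\norm{h}^2_\Hi$ on $\mathcal B\cap\dom(\partial_x)$, and combining it with the Lipschitz bounds (iii)--(iv), the Itô isometry for the \Levy integral (using $\E[|L(1)|_\V^2]=1$, zero mean, and the footnote bound $\norm{\cdot}_{L_2(\mathcal R,\Hi)}\leq\norm{\cdot}_{\mathrm{op}}$), I expect
\[
\E\,\norm{D(t)}^2_\Hi\leq e^{-(\alpha-2L_a-L_b^2)t}\,\E\,\norm{D(0)}^2_\Hi,
\]
which decays to $0$ by (v). Running the same computation with the growth constants $K_a,K_b$ gives a uniform second-moment bound $\sup_{t\geq0}\E\norm{Y(t)}^2_\Hi<\infty$. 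I then carry out the standard pullback construction: extend $L$ to a two-sided \Levy process and let $Y^{(s)}$ be the solution started at time $-s$ from $\pi x_0$, so that $\mathcal L(Y^{(s)}(0))=\mathcal L(Y(s))$ by time-homogeneity while every $Y^{(s)}$ carries the same frozen component $P_\mathcal C\pi x_0$. Comparing $Y^{(s)}(0)$ and $Y^{(s')}(0)$ on $[-s,0]$ via the contraction estimate and the uniform bound shows that $(Y^{(s)}(0))_{s\geq0}$ is Cauchy in $L^2(\Omega;\Hi)$; its limit has a law $\mu_{x_0}$, and $\mathcal L(Y(s))\to\mu_{x_0}$ in law.

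\emph{The case $\mathcal C=\{0\}$ and the main obstacle.} If $\mathcal C=\{0\}$ the contraction holds for arbitrary initial data, so the pullback limit no longer depends on $x_0$; invariance of the resulting law (hence of $\nu$) then follows from the Feller property of the transition semigroup, a consequence of the Lipschitz bounds, applied to $P_t^\ast\mathcal L(Y(s))=\mathcal L(Y(s+t))$ as $s\to\infty$. The hard part is the contraction estimate: mild solutions of \eqref{eq:hyp-spde} are not $\Hi$-valued semimartingales, so Itô's formula for $\norm{D(t)}^2_\Hi$ is not directly available. I would justify it through the Yosida approximations $\partial_x^{(n)}=n^2R(n,\partial_x)-nI$, for which strong solutions exist and Itô's formula is legitimate, derive the bound at the approximate level and pass to the limit. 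The delicate point is to retain the sharp constant $\alpha-2L_a-L_b^2$ of (v) instead of a coarser sufficient condition, which requires using the decay (ii) inside the time integrals and not merely as a prefactor on the initial datum.
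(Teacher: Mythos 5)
Your proposal is correct in substance and shares the paper's structural skeleton: identify $x_0$ with a constant function, observe that the $\mathcal C$-component of the mild solution is frozen (your $P_{\mathcal C}Y(t)=P_{\mathcal C}Y(0)$ is exactly the paper's decomposition $Y(t)=Z(t)+x_0-z_0$ with $Z=\pi_{\mathcal B}Y$ and $z_0=\pi_{\mathcal B}x_0$), establish exponential dissipativity on $\mathcal B$ with the sharp rate $\alpha-2L_a-L_b^2>0$ from (v), and push the limiting law of $Y$ through $\delta_0$ by continuous mapping. Where you genuinely diverge is in how the limit theorem on $\mathcal B$ is obtained: the paper verifies the Yosida-approximation condition $2\langle A_n(g-h)+a(g)-a(h),g-h\rangle+\|b(g)-b(h)\|^2\leq -\tfrac{\epsilon}{2}\|g-h\|^2_{\Hi}$ and then invokes \citep[Theorem 16.5]{PZ} as a black box (adding a short conditioning remark so that the $\mathcal F_0$-measurable shift $z_0-x_0$ in the coefficients is admissible), whereas you re-prove the content of that theorem from scratch: an It\^o/Yosida contraction estimate between two solutions, a two-sided pullback construction yielding an $L^2$-Cauchy family $Y^{(s)}(0)$, and the Feller property to get invariance when $\mathcal C=\{0\}$. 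Your route buys a self-contained argument and an explicit limit random variable (hence $L^2$-convergence of the pullbacks, stronger than the weak convergence the citation delivers), and it handles the frozen component naturally, since both compared solutions carry the same one, so no conditioning remark is needed for deterministic $x_0$. Its cost is that the step you yourself flag as the hard part --- justifying the It\^o formula for $\|D(t)\|^2_{\Hi}$ via Yosida approximants while retaining the sharp constant --- is precisely the technical content the citation encapsulates; note that the paper's verification already performs the approximate-level bookkeeping you would need, obtaining $\langle A_nh,h\rangle\leq -\tfrac{\alpha}{2+\alpha/n}\|h\|^2_{\Hi}$ on $\mathcal B$ from (ii) and absorbing the $O(1/n)$ error into the slack $\epsilon=\alpha-(2L_a+L_b^2)$, so your program does go through with the stated constant. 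One small point, common to both routes and worth stating explicitly: because the coefficients are evaluated at arguments shifted by the frozen component (the paper's $a(\cdot+z_0-x_0)$, your common $P_{\mathcal C}\pi x_0$), the Lipschitz bounds (iii)--(iv) are actually applied on the affine translate $\mathcal B+(z_0-x_0)$ rather than on $\mathcal B$ itself; this is harmless since only differences enter, but it deserves a sentence.
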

\begin{proof}
  Let $\pi_\mathcal B:\Hi\rightarrow\mathcal B$ be the orthogonal projection and $z_0 := \pi_\mathcal Bx_0$ where we identify $x_0$ with the constant function $t\mapsto x_0$. Note that $\pi_\mathcal B\mathcal S_t = \mathcal S_t\pi_{\mathcal B}$ for any $t\geq 0$ because $\mathcal S_t h =h$ for any $h\in\mathcal C$ and $\mathcal B$ is $\mathcal S$-invariant. Let $Y$ be the mild solution to the SPDE \ref{eq:hyp-spde}, i.e.
   $$ Y(t) = x_0 + \int_0^t \mathcal S_{t-s}a(Y(s)) ds + \int_0^t \mathcal S_{t-s}b(Y(s)) dL(s)$$
  and by condition (i)
   $$ Z(t) := \pi_{\mathcal B}Y(t) = z_0 + \int_0^t \mathcal S_{t-s}a(Y(s)) ds + \int_0^t \mathcal S_{t-s}b(Y(s)) dL(s).$$
  We see that $Y(t) = Z(t) + x_0-z_0$ for any $t\geq 0$ which yields
   $$ Z(t) = z_0 + \int_0^t \mathcal S_{t-s}a(Z(s) + z_0-x_0) ds + \int_0^t \mathcal S_{t-s}b(Z(s)+z_0-x_0) dL(s).$$
  We now like to verify \citep[Theorem 16.5]{PZ} for $Z$ on $\mathcal B$. First note that \citep[Theorem 16.5]{PZ} does not allow for stochastic coefficients. However, our stochastic dependency is on $\mathcal F_0$ only and the increments of the driving L\'evy process $L$ are $\mathcal F_0$-independent. A simple conditioning argument allows to use $\mathcal F_0$-dependent coefficients in \citep[Theorem 16.5]{PZ}. 
  
  Now, let $A_n$ be the $n$-th Yosida approximation of $\mathcal S$ on $\mathcal H$, i.e.\  
   $$ A_n h := n^2\int_0^{\infty}e^{-nt}(\mathcal S_t h-h) dt,\quad h\in\mathcal B$$
  and condition (ii) yields that when restricting to $\mathcal B$ we have
  \begin{eqnarray}
   \<A_nh,h\> &=& n^2\int_0^{\infty}  e^{-nt} \< (\mathcal S_t h-h) , h\>dt \nonumber  \\
   &  \leq & n^2\int_0^{\infty} e^{-nt} ( \| \mathcal S_t  |_{\mathcal B}\|_{\mathrm{op}} -1 )\| h \|_\Hi^2 dt \nonumber \\ 
   &  \leq & n^2\int_0^{\infty} e^{-nt} ( e^{-\alpha t/2} -1 ) dt \| h \|_\Hi^2 \nonumber \\
   &  \leq & -\frac{\alpha}{2+\alpha/n}\|h\|^2_\Hi, \quad h\in\mathcal B. \nonumber
  \end{eqnarray}
 Due to conditions (iii), (iv) and (v), we find with $\epsilon := \alpha-(2L_a+L_b^2)>0$ and $n\in\mathbb N$ larger than $\max\{2\alpha,2\alpha^2/\epsilon\}$ that
 \begin{align*} 
  &2\< A_n(g-h) + a(g)-a(h), g-h\> + \|b(g) - b(h)\|^2_{L(\V,\Hi)} \\
  &\qquad\qquad\leq -2\frac{\alpha}{2+\alpha/n}\|g-h\|_{\Hi}^2 + (2L_a+L^2_b)\|g-h\|^2_{\Hi} \\
  &\qquad\qquad\leq \frac{-\epsilon + (\alpha^2/n-\epsilon)}{2+\alpha/n}\|g-h\|_{\Hi}^2 \\
  &\qquad\qquad\leq \frac{-\frac{3}{2}\epsilon}{2+\alpha/n}\|g-h\|_{\Hi}^2 \\
  &\qquad\qquad\leq -\epsilon /2 \|g-h\|_{\Hi}^2
  \end{align*}
for any $g,h\in\mathcal B$. Thus, the requirements of \citep[Theorem 16.5]{PZ} are met and, hence, there is a limiting law $\mu$ for $Z(t)$ when $t\rightarrow\infty$ which does not depend on the initial law of $Z$. Since $X(t) = \delta_0(x_0-z_0+Z(t)) = x_0 + \delta_0(Z(t)-z_0)$ we find that $X$ has a limiting law, depending on $x_0$.

For the last part of the statement we may now assume additionally that $\mathcal C=\{0\}$. Then $\mathcal B=\Hi$ and $\pi_\mathcal B$ is the identity. Thus, $x_0-z_0=0$ which yields
 $$ X(t) = \delta_0Z(t) \rightarrow \nu := \mu^{\delta_0},\quad \text{ in law when }t\rightarrow 0. $$
 
 Now, let the law of $Z(0)$ be $\mu$ and $X := \delta_0Z$. Then $Z(t)$ has the same law as $Z(0)$ for any $t\geq 0$, the law of $X(0)$ is $\nu$, $X(0)$ is the unique solution to the SVIE \eqref{V:SDE:hom} and the law of $X(t)$ is the pushforward law of $Z(t)$ under $\delta_0$ and, hence, this law is $\nu$. Consequently, $\nu$ is an invariant law for the SVIE \eqref{V:SDE:hom}.
\end{proof}

\begin{proposition}\label{prop:perm abstract}
  Let $\pi_0$, $\pi_1$ be orthogonal projections on $\Hi$ with $\pi_0+\pi_1$ equal to the identity operator. We assume that there is $L_a,L_b,\beta,\gamma\geq 0$ such that
   \begin{enumerate}[(i)]
    \item $\|\pi_0 \mathcal S_t g\|_{\Hi} \leq e^{\gamma t/2}\|\pi_0 g\|_\Hi$,
    \item $\|\pi_1\mathcal S_t g\|_{\Hi} \leq e^{-\beta t}\|\pi_1 g\|_\Hi$,
    \item $\<\pi_0( a(g) - a(h)),g-h\> \leq -\beta \|\pi_0(g-h)\|^2_\Hi$,
\item $\norm{\pi_1(a(g) - a(h))}_{\Hi} \leq   L_a \norm{g-h}_{\Hi}$, 
\item $\norm{b(g) - b(h)}_{\mathrm{op}} \leq  L_b \norm{g-h}_{\Hi}$ and
\item $\gamma+2L_a+L_b^2 < 2\beta$
   \end{enumerate}
   for any $t\geq 0$ and $g, h\in\Hi$.
   
 Then there is a limiting distribution $\nu$ for the solution to the SVIE
  $$ X(t) = x_0 + \int_0^t \mu(t-s,X(s)) ds + \int_0^t \sigma(t-s,X(s)) dL(s), $$
  i.e.\ $X(t) \rightarrow \nu$ in law for $t\rightarrow \infty$ and $\nu$ does not depend on the initial value.
\end{proposition}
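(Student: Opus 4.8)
The plan is to mimic the proof of Proposition~\ref{prop:limit}. I would realise the SVIE solution as the boundary value $X(t)=\delta_0 Y(t)$ of the mild solution $Y$ of the associated SPDE~\eqref{eq:hyp-spde} with autonomous coefficients $a(h)=\mu(\cdot,\delta_0 h)$, $b(h)=\sigma(\cdot,\delta_0 h)$ and initial value $Y(0)=\pi x_0$ (identifying $x_0\in\U$ with the constant function, using Lemma~\ref{l:constant functions} if needed). Since here the coefficients do not depend on the initial datum, no decoupling onto a subspace is required and one works with the full process $Y$ on $\Hi$; the $\mathcal F_0$-measurability of $Y(0)$ is dealt with by the same conditioning remark as in Proposition~\ref{prop:limit}. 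The whole task then reduces to establishing, on all of $\Hi$, the dissipativity inequality underlying \citep[Theorem 16.5]{PZ}, after which that theorem supplies a limiting law $\mu$ for $Y(t)$ that is independent of $Y(0)$, and $\nu:=\mu^{\delta_0}$ is the sought limiting law of $X$.

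Concretely, writing $A_n$ for the $n$-th Yosida approximation of $\partial_x$, $u:=g-h$, and using the orthogonal splitting $u_0:=\pi_0 u$, $u_1:=\pi_1 u$, the goal is
$$ 2\<A_n u+a(g)-a(h),\,u\>+\|b(g)-b(h)\|^2_{L_2(\mathcal R,\Hi)}\leq -c\,\|u\|_\Hi^2 $$
for some $c>0$ and all sufficiently large $n$. For the Yosida term I would exploit self-adjointness of the projections to write $\<\mathcal S_t u,u\>=\<\pi_0\mathcal S_t u,u_0\>+\<\pi_1\mathcal S_t u,u_1\>$, so that Cauchy--Schwarz together with (i) and (ii) gives $\<\mathcal S_t u,u\>\leq e^{\gamma t/2}\|u_0\|^2+e^{-\beta t}\|u_1\|^2$. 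Integrating against $n^2e^{-nt}$ then yields
$$ \<A_n u,u\>\leq \frac{n\gamma/2}{\,n-\gamma/2\,}\,\|u_0\|^2-\frac{n\beta}{\,n+\beta\,}\,\|u_1\|^2, $$
whose right-hand side converges to $\tfrac{\gamma}{2}\|u_0\|^2-\beta\|u_1\|^2$ as $n\to\infty$. Note that only the norm bounds (i)--(ii) enter, so no commutation of $\pi_0,\pi_1$ with $\mathcal S_t$ is needed.

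It remains to add the drift and noise contributions and collect the coefficients of $\|u_0\|^2$ and $\|u_1\|^2$. Splitting $\<a(g)-a(h),u\>=\<\pi_0(a(g)-a(h)),g-h\>+\<\pi_1(a(g)-a(h)),u_1\>$ and applying the mean-reversion bound (iii) and the Lipschitz bound (iv) gives $2\<a(g)-a(h),u\>\leq -2\beta\|u_0\|^2+2L_a\|u\|^2$, while (v) together with the footnote bound $\|\cdot\|_{L_2(\mathcal R,\Hi)}\leq\|\cdot\|_{\mathrm{op}}$ shows the noise term is at most $L_b^2\|u\|^2$. Using $\|u\|^2=\|u_0\|^2+\|u_1\|^2$, the limiting coefficients as $n\to\infty$ become $\gamma+2L_a+L_b^2-2\beta$ for $\|u_0\|^2$ and $2L_a+L_b^2-2\beta$ for $\|u_1\|^2$; by hypothesis (vi) and $\gamma\geq 0$ these equal $-\epsilon$ and $-\epsilon-\gamma$ respectively with $\epsilon:=2\beta-(\gamma+2L_a+L_b^2)>0$, hence are strictly negative. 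Choosing $n$ large enough makes both coefficients at most $-\epsilon/2$, which is the desired inequality with $c=\epsilon/2$.

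I expect the main obstacle to be bookkeeping the two coefficient estimates so that a single negative constant survives the limit $n\to\infty$: the destabilising term $2L_a\|u\|^2$ produced by the $\pi_1$-part of the drift loads onto both coefficients, and it is precisely the margin $\gamma+2L_a+L_b^2<2\beta$ of (vi), combined with $\gamma\geq 0$ rendering the $\|u_1\|^2$ coefficient even more negative, that forces negativity in both. Once the dissipativity holds, \citep[Theorem 16.5]{PZ} gives the limiting law of $Y(t)$ independent of initialisation; pushing it forward through the bounded operator $\delta_0$ gives the limiting law $\nu$ of $X$, and the stationarity argument of Proposition~\ref{prop:limit} shows it is in fact invariant and independent of $x_0$.
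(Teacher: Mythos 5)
Your proposal is correct and takes essentially the same route as the paper: lift to the autonomous SPDE, verify the dissipativity hypothesis of \citep[Theorem 16.5]{PZ} by estimating the Yosida approximation split along $\pi_0,\pi_1$ with (i)--(ii), add the drift and noise bounds from (iii)--(v), invoke the margin (vi), and push the resulting initial-value-independent limiting law of $Y$ forward through the bounded map $\delta_0$. The only difference is cosmetic bookkeeping: you bound $\langle A_n u,u\rangle$ in one combined Cauchy--Schwarz step, and your constant $-n\beta/(n+\beta)$ for the $\pi_1$-part is in fact the correct evaluation of the integral (the paper's $-\beta/(1-\beta/n)$ has a harmless sign slip in the denominator, immaterial since both tend to $-\beta$), after which the limiting coefficients and the conclusion coincide with the paper's.
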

\begin{proof}
  Let $Y$ be the mild solution to the SPDE \ref{eq:hyp-spde}, i.e.
   $$ Y(t) = x_0 + \int_0^t \mathcal S_{t-s}a(Y(s)) ds + \int_0^t \mathcal S_{t-s}b(Y(s)) dL(s)$$
  for $t\geq 0$. We now like to verify the conditions of \citep[Theorem 16.5]{PZ} for $Y$. To this end, let $A_n$ be the $n$-th Yosida approximation of $\mathcal S$. Condition (i) yields together with the Cauchy-Schwarz inequality and $\< \pi_0 h , g\> = \< \pi_0 h , \pi_0 g\>$ for every $h,g \in \Hi$ that 
  \begin{eqnarray}
  \<\pi_0 A_nh,h\> &=& n^2\int_0^{\infty}  e^{-nt} \< \pi_0 (\mathcal S_t h-h) , h\>dt \nonumber  \\
  &=& n^2\int_0^{\infty}  e^{-nt} \< \pi_0 (\mathcal S_t h-h) , \pi_0h\>dt \nonumber  \\
   &  \leq & n^2\int_0^{\infty} e^{-nt} ( \| \pi_0  \mathcal S_t h \|_\Hi \| \pi_0h \|_\Hi  -\| \pi_0h \|_\Hi^2 dt \nonumber \\ 
   &  \leq & n^2\int_0^{\infty} e^{-nt} ( e^{\gamma t/2} -1 ) dt \| \pi_0 h \|_\Hi^2 \nonumber \\
  &\leq & \frac{\gamma}{2-\gamma/n}\| \pi_0 h\|^2_\Hi, \quad h\in\Hi \label{yosida:estimate}
 \end{eqnarray}
  whenever $n$ is such that $\gamma/n<2$. Similarly we obtain from (ii) that 
  \begin{equation}\label{yosida:estimate2}
   \<\pi_1 A_nh,h\> \leq   \frac{-\beta}{1-\beta/n}\| \pi_1 h\|^2_\Hi, \quad h\in\Hi
  \end{equation}
  for $\beta/n <1$.
  Moreover, from (iv) it follows by Cauchy-Schwarz 
  \begin{equation}\label{pi1:scalar}
  \<\pi_1a(g)-\pi_1 a(h),g-h\> \leq \| \pi_1a(g)-\pi_1 a(h)  \|_\Hi  \|g-h \|_\Hi \leq L_a \norm{g-h}_{\Hi}^2
  \end{equation}
  
  Define now $\epsilon := (2\beta-\gamma-2L_a-L_b^2)/2$ (which is strictly positive by (vi). 
Using (iii), (v), (\ref{yosida:estimate}), (\ref{yosida:estimate2}), (\ref{pi1:scalar}) and the fact that $\|\pi_1 (g-h)\|_\Hi \leq \|g-h\|_\Hi$ we obtain 
   \begin{align*}
      2\< &A_n(g-h)+a(g)-a(h),g-h\> + \|b(g)-b(h)\|_\Hi^2 \\
      & = 2\<  \pi_0 A_n(g-h),g-h\> + 2\< \pi_1 A_n(g-h),g-h\> + 2 \<\pi_0a(g)-\pi_0a(h),g-h\> \\ 
      &+ 2\<\pi_1a(g)-\pi_1 a(h),g-h\>  + \|b(g)-b(h)\|_\Hi^2 \\
      &\leq \frac{2\gamma}{2-\gamma/n}\| \pi_0 (g-h)\|^2_\Hi -  \frac{2\beta}{1-\beta/n}\| \pi_1 (g-h) \|^2_\Hi - 2\beta \|\pi_0(g-h)\|^2_\Hi+2 L_a\|g-h\|^2_\Hi + L_b^2\|g-h\|^2_\Hi  \\
      &\leq \left(\frac{\gamma}{1-\gamma/(2n)}-2\beta+2L_a+L_b^2 \right) \|g-h\|^2_\Hi -  \frac{2\beta^2}{n-\beta}\| \pi_1 (g-h) \|^2_\Hi   \\
      &\leq  \left(\frac{\gamma}{1-\gamma/(2n)}-2\beta+2L_a+L_b^2 \right) \|g-h\|^2_\Hi  \leq -\epsilon\|g-h\|^2_\Hi  
   \end{align*}
   for $n$ large. Thus, \citep[Theorem 16.5]{PZ} yields that $Y$ has a limiting law which does not depend on the initial value and, hence, $X(t) = \beta_0Y(t)$ has a limiting law which does not depend on its initial value.
\end{proof}
In the next section we provide more specific conditions for our example space $\Hi$ and further analyze the interplay of Lipschitz conditions and contractivity properties of the semigroup in relation to the constant functions.

\section{An example of a function space $\Hi$}\label{examples:H}

We shall now provide a specification of $\Hi$ that allows us to consider Volterra SDEs in general separable Hilbert spaces $\U$. Our example is the extension in \citep{BE2015} of the Filipovi\'c space introduced by \citep{Fili-lectures}. We denote by $L^1_{loc} (\mathbb{R}_+, \U)$ the space of locally Bochner-integrable functions from $\mathbb{R}_+$ to $\U$ and by $AC(\mathbb{R}_+,\U)$ the space of absolutely-continuous functions from $\mathbb R_+$ to $\U$, i.e.\ $f\in AC(\mathbb{R}_+,\U)$ if and only if there is a function $g\in L^1_{loc} (\mathbb{R}_+, \U)$ with $f(x)-f(y) = \int_y^xg(s)ds$ for any $0\leq y \leq x$. If $f\in AC(\mathbb{R}_+,\U)$ is given, then the function $g\in L^1_{loc} (\mathbb{R}_+, \U)$ is $ds$-a.e.\ unique and we write $f':=g$ for a version. Whenever $f'$ has a continuous version we mean by $f'$ the unique continuous version. Following \citep{BE2015} we define the space $\Hi_w$ of $\U$-valued {\em smooth} functions. We assume that $w\in C^1 (\mathbb{R}_+)$ is a non-decreasing function with $w(0)=1$ and such that $w^{-1} \in L^1 (\mathbb{R}_+)$.

We define the space $\Hi_w$ by
\begin{equation*}
\Hi_w = \{ f \in AC(\mathbb{R}_+,\U) \vert  \norm{f}_{w} < \infty \},
\end{equation*}
where $\norm{f}^2_{w} := \norm{f(0) }^2_{\U} + \int_0^{\infty} w(x) \norm{f' (x)}_{\U}^2 \,dx$. Further define the scalar product
\begin{equation*}
\langle f, g \rangle_{w}= \langle f (0), g(0) \rangle_{\U} + \int_0^{\infty} w(x)  \langle f' (x), g' (x) \rangle_{\U} \, dx
\end{equation*}
which obviously satisfies $\norm{f}^2_{w}=\langle f, f \rangle_{w}$.

It is already known that $(\Hi_w, \norm{\cdot }_{w})$ is a separable Hilbert space (\citep[Prop. 3.4.]{BE2015}). Additionally, we know from \citep[Lemma 3.8.]{BE2015} that the evaluation map $\delta_x$ is a bounded linear operator from $\Hi$ to $\U$. This allows us to show that the semigroup $(\mathcal{S}_t )_{t\geq 0}$ is strongly continuous and to identify its generator.
\begin{proposition}
The family $(\mathcal{S}_t )_{t\geq 0}$ is a $C_0$-semigroup in $\Hi_w$, the domain $\text{Dom}(\partial_x)$ of its generator $\partial_x$ is densely defined, satisfies
 $$ \text{Dom}(\partial_x) = \{ f\in \Hi_w | f'\in \Hi_w\} $$
 and its generator is given by
  $$ \partial_x f = f',\quad f\in \text{Dom}(\partial_x). $$
\end{proposition}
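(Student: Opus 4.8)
The plan is to first verify that $(\mathcal S_t)_{t\geq 0}$ is a $C_0$-semigroup on $\Hi_w$ and then identify its generator by proving a two-sided inclusion. The algebraic semigroup identities $\mathcal S_s\mathcal S_tf = f(\cdot+s+t)=\mathcal S_{s+t}f$ and $\mathcal S_0=\mathrm{id}$ are immediate. For boundedness of each $\mathcal S_t$ I would use that $(\mathcal S_tf)'(x)=f'(x+t)$ to write
$$ \norm{\mathcal S_tf}_w^2 = \norm{\delta_tf}_\U^2 + \int_0^\infty w(x)\norm{f'(x+t)}_\U^2\,dx = \norm{\delta_tf}_\U^2 + \int_t^\infty w(y-t)\norm{f'(y)}_\U^2\,dy, $$
and then bound the right-hand side by a constant multiple of $\norm{f}_w^2$ using that $w$ is non-decreasing (so $w(y-t)\leq w(y)$) together with the uniform boundedness of $\delta_t$ from \citep[Lemma 3.8]{BE2015}.

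The crux is strong continuity. Here I would write
$$ \norm{\mathcal S_tf - f}_w^2 = \norm{f(t)-f(0)}_\U^2 + \int_0^\infty w(x)\norm{f'(x+t)-f'(x)}_\U^2\,dx. $$
The first term vanishes as $t\searrow 0$ because every $f\in AC(\R_+,\U)$ is continuous, while the second term is precisely the modulus of continuity of the translation of $g:=f'$ in the weighted space $L^2(\R_+,w\,dx;\U)$. I would prove $\int_0^\infty w(x)\norm{g(x+t)-g(x)}_\U^2\,dx\to 0$ by a density argument: continuous compactly supported functions are dense in $L^2(\R_+,w\,dx;\U)$ since $w$ is continuous, hence locally bounded; for such functions the convergence follows from uniform continuity and dominated convergence; and the approximation error is controlled uniformly in $t$ once more via $w(y-t)\leq w(y)$. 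This establishes the $C_0$-property, after which density of $\dom(\partial_x)$ is automatic from the general theory of $C_0$-semigroups. I expect this weighted translation-continuity estimate to be the main obstacle, as it is the only step that genuinely uses the structure of $w$ and where the monotonicity of $w$ is indispensable.

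To identify the generator, set $D:=\{f\in\Hi_w : f'\in\Hi_w\}$. For the inclusion $D\subseteq\dom(\partial_x)$ with $\partial_xf=f'$, take $f\in D$ and $g:=f'\in\Hi_w$. The difference quotient $\tfrac{1}{t}(\mathcal S_tf-f)-f'$ has boundary value $\tfrac1t\int_0^t(f'(s)-f'(0))\,ds\to0$ and $x$-derivative $\tfrac1t\int_0^t(g'(x+r)-g'(x))\,dr$, using $g(x+t)-g(x)=\int_0^t g'(x+r)\,dr$. Applying Minkowski's integral inequality in $L^2(\R_+,w\,dx;\U)$ bounds the $\Hi_w$-distance by $\tfrac1t\int_0^t\norm{\mathcal S_rg'-g'}_{L^2(w)}\,dr$, which tends to $0$ by the translation continuity already established (applied to $g'=f''\in L^2(w)$). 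Hence $\tfrac1t(\mathcal S_tf-f)\to f'$ in $\Hi_w$, giving $f\in\dom(\partial_x)$ and $\partial_xf=f'$.

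For the reverse inclusion $\dom(\partial_x)\subseteq D$ I would invoke the fundamental identity $\mathcal S_tf-f=\int_0^t\mathcal S_s\,\partial_xf\,ds$ valid for $f\in\dom(\partial_x)$. Pushing the bounded operator $\delta_x$ through this Bochner integral (as in \citep[Proposition 3.15]{PZ}) yields $f(x+t)-f(x)=\int_0^t(\partial_xf)(x+s)\,ds$ for all $x,t\geq0$; evaluating at $x=0$ gives $f(t)=f(0)+\int_0^t(\partial_xf)(s)\,ds$, so $f\in AC(\R_+,\U)$ with $f'=\partial_xf\in\Hi_w$, i.e.\ $f\in D$. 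Combining the two inclusions proves $\dom(\partial_x)=\{f\in\Hi_w:f'\in\Hi_w\}$ and $\partial_xf=f'$, with density following from the $C_0$-property.
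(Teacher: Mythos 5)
Your proof is correct, but it takes a genuinely different route from the paper's in all three components. For strong continuity the paper simply cites \citep[Lemma 3.7]{BE2015}, whereas you prove it from scratch; your density-plus-weighted-translation-continuity argument is sound, and the uniform-in-$t$ control of the approximation error via $w(y-t)\leq w(y)$ is exactly where the monotonicity of $w$ enters. For the inclusion $\mathrm{Dom}(\partial_x)\subseteq\{f\in\Hi_w: f'\in\Hi_w\}$ the paper evaluates the $\Hi_w$-limit pointwise through the continuous functionals $\delta_r$, obtaining the classical right-derivative, and then invokes Lebesgue's differentiation theorem together with continuity of $\partial_xf$ to identify $f'=\partial_xf$; your route via the fundamental identity $\mathcal S_tf-f=\int_0^t\mathcal S_s\,\partial_xf\,ds$ and pushing the bounded operator $\delta_x$ through the Bochner integral reaches the same conclusion and is arguably cleaner, since it avoids the a.e.-version bookkeeping (you do tacitly use the paper's convention that $f'$ denotes the continuous version, which is justified because $\partial_xf\in\Hi_w\subseteq AC(\R_+,\U)$). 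For the converse inclusion the paper uses a curve trick you did not: for $f'\in\Hi_w$ it sets $\Gamma(r):=f+\int_0^r\mathcal S_sf'\,ds$, checks $\delta_x\Gamma(r)=f(r+x)=\delta_x(\mathcal S_rf)$, and concludes $\Gamma(r)=\mathcal S_rf$ because the point evaluations are separating, so that differentiability of $\Gamma$ at $0$ gives $f\in\mathrm{Dom}(\partial_x)$ with $\partial_xf=f'$ using only strong continuity applied to $f'$. Your direct difference-quotient estimate with Minkowski's integral inequality instead works one derivative deeper, at the level of $g'=f''$; this is legitimate ($f'\in\Hi_w$ does give $f''\in L^2(\R_+,w\,dx;\U)$, so your translation-continuity lemma applies), but it is computationally heavier. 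In short, the paper's argument buys economy by never differentiating twice, while yours buys self-containedness: your single weighted translation-continuity lemma drives both the $C_0$-property and the generator identification.
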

\begin{proof}
It was shown in \citep[Lemma 3.7.]{BE2015} that $( \mathcal{S}_t )_{t\geq 0}$ is strongly continuous. It then follows (see for example \citep[Thm 1.4.]{EngelNagel}) that the generator $\partial_x$ of $( \mathcal{S}_t )_{t\geq 0}$ is densely defined. Let $f\in \text{Dom}(\partial_x)$. Then $\partial_xf \in \Hi_w$ and
 $$ \partial_xf(r) = \lim_{t \searrow 0} \frac{\mathcal S_tf(r)-f(r)}{t} = \lim_{t\searrow 0}\frac{f(t+r)-f(t)}{r} $$
 which is the classical right-derivative. Since $f\in AC(\mathbb{R}_+,\U)$, Lebesgue's differentiation theorem yields that $f'$ is the derivative of $f$ $ds$-a.e., i.e.\ there is a set $N$ including $\{0\}$ of Lebesgue measure zero such that outside $N$ we find $f'(r) = \lim_{t\rightarrow 0} \frac{f(t+r)-f(r)}{t} = \partial_xf(r)$. Thus, $f' = \partial_xf$ $ds$-a.e. but $\partial_xf \in AC(\mathbb{R}_+,\U)$ and, hence, continuous. Consequently, $\partial_xf$ is a continuous version of $f'$, so $f'=\partial_xf$. This proves that
  $$ \text{Dom}(\partial_x) \subseteq  \{ f\in \Hi_w | f'\in \Hi_w\} $$
 and that $$ \partial_x f = f',\quad f\in \text{Dom}(\partial_x). $$
 
 Now let $f\in \Hi_w$ be such that $f'\in\Hi_w$. Then, $t\mapsto\mathcal S_tf'$, $t\geq 0$, is continuous and, hence $$ \Gamma(r):= f + \int_0^r \mathcal S_tf' dt $$ defines a $C^1$-function from $\mathbb R_+$ to $\U$ with $\Gamma(0) = f$ and $\Gamma'(t) = \mathcal S_tf'$. For $x\geq 0$ we see that
  $$ \delta_x(\Gamma(r)) = f(x) + \int_0^r f'(t+x) dt = f(r+x) = \delta_x(\mathcal S_rf) $$
 and, hence, we have $\Gamma(r) = \mathcal S_rf$. Consequently, $f\in\text{Dom}(\partial_x)$ and 
  $$ \partial_xf = \Gamma'(0) = f'. $$
  This concludes the proof.
\end{proof}

It remains to show that $( \mathcal{S}_t )_{t\geq 0}$ is quasi-contractive. The proof will make use of the adjoint operator $\delta_x^{*}$ of $\delta_x$, which we derive in Lemma~\ref{dx:adjoint}. For this we need the following result about the weak derivative of the scalar product.
\begin{lemma}\label{integral:scalarproduct}
Let $f\in \Hi_w$. Then, for every $u\in \U$
\begin{equation}\label{weak:der:scalar}
\langle f  (x),u \rangle_{\U} =\langle f(0), u \rangle_{\U} + \int_0^x  \langle f' (t), u \rangle_{\U} \, dt .
\end{equation}
\end{lemma}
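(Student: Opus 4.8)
The plan is to reduce the claimed identity directly to the defining property of absolute continuity, combined with the elementary fact that a bounded linear functional commutes with the Bochner integral. Since $f\in\Hi_w\subseteq AC(\R_+,\U)$, the very definition of $AC(\R_+,\U)$ supplies a representative $f'\in L^1_{loc}(\R_+,\U)$ with
\begin{equation*}
f(x)-f(0)=\int_0^x f'(s)\,ds
\end{equation*}
holding as a Bochner integral in $\U$ for every $x\ge 0$. This is the only structural input I would use.

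Next, I would fix $u\in\U$ and apply the bounded linear functional $\langle\cdot,u\rangle_{\U}$ to both sides of the displayed equation. Because a continuous linear map may always be pulled inside a Bochner integral (the same fact invoked in Step 1 of the proof of Theorem~\ref{Volterra:sol} for \eqref{SPDE-sol}, cf.\ \citep[Proposition 3.15(ii)]{PZ}), this yields
\begin{equation*}
\langle f(x),u\rangle_{\U}-\langle f(0),u\rangle_{\U}=\left\langle\int_0^x f'(s)\,ds,\,u\right\rangle_{\U}=\int_0^x\langle f'(s),u\rangle_{\U}\,ds,
\end{equation*}
which is exactly \eqref{weak:der:scalar} after rearranging.

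Before carrying this out I would dispatch the small measurability and integrability point needed to make the right-hand side meaningful: the scalar function $s\mapsto\langle f'(s),u\rangle_{\U}$ is measurable, since $f'$ is strongly measurable and $\langle\cdot,u\rangle_{\U}$ is continuous, and by Cauchy--Schwarz it satisfies $\abs{\langle f'(s),u\rangle_{\U}}\le\norm{f'(s)}_{\U}\norm{u}_{\U}$; as $f'\in L^1_{loc}(\R_+,\U)$, this bound is locally integrable, so the integral over $[0,x]$ is finite.

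There is essentially no hard step here: the statement is a one-line consequence of the definition of $AC(\R_+,\U)$ once the commutation of the functional with the Bochner integral is in place. The only point demanding a moment's care is that the identity must hold for the genuine pointwise values $f(x)$ and $f(0)$ rather than merely almost everywhere; this is automatic, because elements of $\Hi_w$ are honest functions for which the evaluation maps $\delta_x$ are well defined and continuous by \citep[Lemma 3.8]{BE2015}, and the $AC$-representative identity above already holds for every $x\ge 0$.
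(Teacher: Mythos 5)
Your proof is correct and follows essentially the same route as the paper: obtain the Bochner-integral representation $f(x)=f(0)+\int_0^x f'(s)\,ds$ and then pull the bounded linear functional $\langle\cdot,u\rangle_{\U}$ inside the integral. The only (harmless) difference is that the paper cites \citep[Prop.~3.5]{BE2015} together with $w^{-1}\in L^1(\mathbb{R}_+)$ to get this representation with $f'\in L^1(\mathbb{R}_+,\U)$, whereas you appeal directly to the paper's own definition of $AC(\mathbb{R}_+,\U)$, for which $f'\in L^1_{loc}$ suffices since the integration is over the compact interval $[0,x]$.
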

\begin{proof}
Using that $w^{-1} \in L^1 (\mathbb{R}_+)$, it follows from \citep[Prop. 3.5.]{BE2015} that 
\begin{equation*}
f  (x)=f(0) + \int_0^x f' (t)\, dt
\end{equation*}
with $f' \in L^1 (\mathbb{R},\U)$ and the integral on the right hand side is in the sense of Bochner. This shows that 
\begin{equation*}
\langle f  (x),u \rangle_{\U} =\langle f(0), u \rangle_{\U} + \langle  \int_0^x   f' (t) \, dt, u \rangle_{\U} .
\end{equation*}
But since for every $u\in \U$ the operator $\langle \cdot ,u \rangle : \U \rightarrow \mathbb{R} $ is bounded and linear, we obtain that 
\begin{equation*}
\langle  \int_0^x   f' (t) \, dt, u \rangle_{\U} = \int_0^x  \langle f' (t), u \rangle_{\U} \, dt,
\end{equation*}
by properties of the Bochner integral. Thus, (\ref{weak:der:scalar}) follows.
\end{proof}
The last lemma allows us to derive the adjoint operator of the evaluation map $\delta_x$.
\begin{lemma}\label{dx:adjoint}
The adjoint operator $\delta_x^* : \U \rightarrow \Hi_w$ of $\delta_x$, $x\in\mathbb R_+$ is given by
\begin{equation*}
\delta_x^* (u) (\cdot) =(1 + \int_0^{\cdot\wedge x} w^{-1} (s) \, ds ) u
\end{equation*}
and $\norm{ \delta_x^* }^2_{\mathrm{op}}=\norm{ \delta_x}^2_{\mathrm{op}} = 1 + \int_0^{x} w^{-1} (s) \, ds  \leq 1 + x$.
\end{lemma}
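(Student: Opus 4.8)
The plan is to guess the candidate for $\delta_x^*$, verify directly that it satisfies the defining adjoint relation $\langle \delta_x^* u, f\rangle_w = \langle u, f(x)\rangle_\U$ for all $f \in \Hi_w$ and $u \in \U$, and then read off the operator norm by identifying $\delta_x\delta_x^*$ as a scalar multiple of the identity on $\U$.

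First I would set $g := \delta_x^* u$ to be the claimed function $g(y) = (1 + \int_0^{y\wedge x} w^{-1}(s)\,ds)\,u$ and check that it lies in $\Hi_w$. Since $y \mapsto 1 + \int_0^{y\wedge x} w^{-1}(s)\,ds$ is absolutely continuous with derivative $w^{-1}(y)\mathbf 1_{[0,x)}(y)$ for a.e.\ $y$, the function $g$ is in $AC(\mathbb R_+,\U)$ with $g(0)=u$ and $g'(y) = w^{-1}(y)\mathbf 1_{[0,x)}(y)\,u$. Membership $g\in\Hi_w$ then follows from
$$ \int_0^x w(y)\,w^{-1}(y)^2\,dy = \int_0^x w^{-1}(y)\,dy < \infty, $$
which is finite because $w^{-1}\in L^1(\mathbb R_+)$.

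Next I would verify the adjoint relation. Expanding $\langle g,f\rangle_w$ with the inner product of $\Hi_w$ and the explicit form of $g'$, the weight $w$ cancels against $w^{-1}$ on $[0,x)$, giving
$$ \langle g, f\rangle_w = \langle u, f(0)\rangle_\U + \int_0^x \langle u, f'(y)\rangle_\U\,dy. $$
At this point the key tool is Lemma~\ref{integral:scalarproduct}, which states exactly that $\langle f(x),u\rangle_\U = \langle f(0),u\rangle_\U + \int_0^x \langle f'(t),u\rangle_\U\,dt$. Substituting this identity collapses the right-hand side to $\langle u, f(x)\rangle_\U = \langle u,\delta_x f\rangle_\U$, which is precisely the adjoint relation; uniqueness of the adjoint then identifies $g = \delta_x^* u$.

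Finally I would compute the norm. Applying $\delta_x$ to $g$ gives $\delta_x\delta_x^* u = g(x) = (1 + \int_0^x w^{-1}(s)\,ds)\,u$, so $\delta_x\delta_x^*$ equals the scalar $c_x := 1 + \int_0^x w^{-1}(s)\,ds$ times the identity on $\U$. Hence $\|\delta_x^* u\|_w^2 = \langle \delta_x^* u,\delta_x^* u\rangle_w = \langle u,\delta_x\delta_x^* u\rangle_\U = c_x\|u\|_\U^2$, which yields $\|\delta_x^*\|_{\mathrm{op}}^2 = c_x$, while $\|\delta_x\|_{\mathrm{op}} = \|\delta_x^*\|_{\mathrm{op}}$ since an operator and its adjoint share the same operator norm. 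The bound $c_x \leq 1+x$ follows because $w$ is non-decreasing with $w(0)=1$, so $w\geq 1$ and hence $w^{-1}\leq 1$ on $\mathbb R_+$. I expect no single step to be a genuine obstacle: the only care needed is the a.e.\ identification of $g'$ (getting the indicator and its right endpoint right) and the legitimacy of the $w$--$w^{-1}$ cancellation. The one conceptual point is recognizing that Lemma~\ref{integral:scalarproduct} is exactly what converts the inner-product computation into a point evaluation.
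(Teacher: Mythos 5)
Your proof is correct and follows essentially the same route as the paper's: both verify the adjoint relation by expanding the $\langle \cdot,\cdot\rangle_w$ inner product, cancelling $w$ against $w^{-1}$, and invoking Lemma~\ref{integral:scalarproduct}, and both then obtain $\norm{\delta_x^*}^2_{\mathrm{op}} = 1+\int_0^x w^{-1}(s)\,ds$ together with $\norm{\delta_x}_{\mathrm{op}}=\norm{\delta_x^*}_{\mathrm{op}}$ and the bound from $w\geq 1$. Your explicit check that $\delta_x^* u \in \Hi_w$ and your shortcut via $\delta_x\delta_x^* = c_x\,\mathrm{id}_{\U}$ (rather than the paper's direct computation of $\norm{\delta_x^*(u)}_w^2$) are only cosmetic differences.
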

\begin{proof}
Let $\delta_x^*$ be defined as above. First observe that by the integral representation of $\delta_x^*$, it follows that 
\begin{equation*}
\delta_x^{*} (u)' (t) = w^{-1} (t) 1_{\{t\leq x \}} \cdot u .
\end{equation*}
We need to show that $\langle f , \delta_x^* (u) \rangle_{w} =\langle \delta_x (f) , u \rangle_{\U}$. To see this calculate 
\begin{align*}
\langle f , \delta_x^* (u) \rangle_{w} &= \langle f (0)  , \delta_x^* (u)(0) \rangle_{\U}  + \int_0^{\infty} w(t)  \langle f' (t)  , \delta_x^{*} (u)'(t) \rangle_{\U} \, dt \nonumber \\
& = \langle f (0)  , u \rangle_{\U} + \int_0^{x}  \langle f' (t)  , u \rangle_{\U} \, dt \nonumber \\
& =  \langle f (x)  , u \rangle_{\U} \\&=  \langle \delta_x (f) , u \rangle_{\U}  \nonumber,
\end{align*}
where we used Lemma~\ref{integral:scalarproduct} in the second to the last line.
The norm calculates as
\begin{align*}
\norm{ \delta_x^* (u) }^2_{w} &= \langle u , u \rangle_{\U} + \int_0^x w(t) \langle  w^{-1}(t) u , w^{-1}(t) u \rangle_{\U} \, dt \nonumber \\
&= \norm{u}_{\U}^2 +  \norm{u}_{\U}^2 \int_0^x w^{-1}(t) \, dt\nonumber
\end{align*}
and since $w(t) \geq 1$ it follows that $\norm{ \delta_x^* (u) }_{w}^2  \leq (1+x) \norm{u}^2_{\U} $. Since $\norm{A}_{\mathrm{op}} = \norm {A^*}_{\mathrm{op}}$ for any adjoint operator $A^*$ of a linear operator $A$, it follows that $\norm{ \delta_x^* }_{\mathrm{op}}^2=\norm{ \delta_x}_{\mathrm{op}}^2$.
\end{proof}
With the help of the last lemma we can now show that the semigroup $( \mathcal{S}_t )_{t\geq 0}$ is quasi contractive.
 \begin{proposition}\label{p:opnorm S}
 The semigroup $( \mathcal{S}_t )_{t\geq 0}$ satisfies the operator-norm bound
 \begin{equation*}
 \norm{\mathcal S_t}_{\mathrm{op}} \leq e^{t / 2},\quad t\geq0.
 \end{equation*}
In particular, it is quasi-contractive. 
 \begin{proof}
The proof is a straightforward modification of a similar result of \citep{BK2017} in the case when $\mathcal U=\mathbb R$. 
We include the proof here for the convenience of the reader. 

Fix $t\geq 0$ and $f\in\mathcal H_w$. Define the functions $g(x)=f(t\wedge x)$ and
$\widetilde{g}(x)=\mathrm{1}_{t\leq x}(f(x)-f(t))$. Then it is easy to see that $g,\widetilde{g}\in\mathcal H_w$ are
orthogonal and $f=g+\widetilde{g}$. Moreover, $\|f\|_w^2=\|g\|_w^2+\|\widetilde{g}\|_w^2$. Since
$\mathcal S_tg(x)=g(x+t)=f(t\wedge(x+t))=f(t)=g(t)$, we find 
$$
\|\mathcal S_t g\|_w^2=\|g(t)\|_{\mathcal U}^2=\|\delta_tg\|_{\mathcal U}^2\leq\|\delta_t\|_{\mathrm{op}}^2\|g\|_w^2.
$$
But from Lemma~\ref{dx:adjoint} it holds that $\|\delta_t\|_{\mathrm{op}}^2\leq 1+t$, and 
hence, $\|\mathcal S_tg\|_w^2\leq (1+t)\|g\|_w^2$. On the other hand, it follows from the
non-decreasing property of $w$ and $\widetilde{g}(t)=0$ that, 
\begin{align*}
\|\mathcal S_t\widetilde{g}\|_w^2&=\|(\mathcal S_t\widetilde{g})(0)\|_{\mathcal U}^2+
\int_0^{\infty}w(x)\|(\mathcal S_t\widetilde{g})'(x)\|_{\mathcal U}^2\,dx \\
&=\|\widetilde{g}(t)\|_{\mathcal U}^2+\int_0^{\infty}w(x)\|\widetilde{g}'(x+t)\|_{\mathcal U}^2\,dx \\
&=\int_t^{\infty}w(y-t)\|\widetilde{g}'(y)\|_{\mathcal U}^2\,dy \\
&\leq\int_t^{\infty}w(y)\|\widetilde{g}'(y)\|_{\mathcal U}^2\,dy \\
&\leq \|\widetilde{g}\|_w^2\,.
\end{align*}
The constancy of $\mathcal S_tg$ and $\mathcal S_t\widetilde{g}(0)=\widetilde{g}(t)=0$ yield orthogonality of
$\mathcal S_tg$ and $\mathcal S_t\widetilde{g}$:
\begin{align*}
\langle\mathcal S_tg,\mathcal S_t\widetilde{g}\rangle_w&=\langle g(t),\widetilde{g}(t)\rangle_{\mathcal U}+\int_0^{\infty}w(x)\langle(\mathcal S_tg)'(x),(\mathcal S_t\widetilde{g})'(x)\rangle_{\mathcal U}\,dx=0\,.
\end{align*}
We therefore find,
$$
\|\mathcal S_t f\|_w^2=\|\mathcal S_tg+\mathcal S_t\widetilde{g}\|_w^2=\|\mathcal S_tg\|_w^2+\|\mathcal S_t\widetilde{g}\|_w^2\leq (1+t)\|g\|_w^2+\|\widetilde{g}\|_w^2.
$$
But as $t\geq 0$, $(1+t)\|g\|_w^2+\|\widetilde{g}\|_w^2\leq(1+t)(\|g\|_w^2+\|\widetilde{g}\|_w^2)=(1+t)\|f\|_w^2$,
and$(1+t)\leq\exp(t)$. Hence, $\|\mathcal S_t f\|_w^2\leq\exp(t)\|f\|_w^2$, and we conclude that 
$\|\mathcal S_t\|_{\mathrm{op}}\leq\exp(t/2)$. 
 \end{proof}
 \end{proposition}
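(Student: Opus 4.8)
The plan is to exploit the Hilbert-space geometry of $\Hi_w$ together with the monotonicity of the weight $w$, rather than estimating $\norm{\mathcal S_t f}_w$ head-on. A direct computation gives
$$ \norm{\mathcal S_t f}^2_w = \norm{f(t)}^2_{\mathcal U} + \int_0^{\infty} w(x)\norm{f'(x+t)}^2_{\mathcal U}\,dx, $$
and while the tail integral is easy to control, bounding the boundary term $\norm{f(t)}^2_{\mathcal U}=\norm{\delta_t f}^2_{\mathcal U}$ by $\norm{\delta_t}^2_{\mathrm{op}}\norm{f}^2_w\leq(1+t)\norm{f}^2_w$ via Lemma~\ref{dx:adjoint} is too lossy: it would yield a factor like $(2+t)$ instead of the sharp $(1+t)$, and hence only $e^t$ in place of $e^{t/2}$.

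The key device I would use to avoid this loss is an orthogonal splitting of $f$ adapted to the cutoff at $x=t$. I set $g(x):=f(t\wedge x)$, the function frozen at its value $f(t)$ beyond $t$, and $\widetilde g(x):=\mathbf 1_{\{t\leq x\}}(f(x)-f(t))$, the centred increment past $t$. Then $f=g+\widetilde g$, and since $g'$ is supported on $[0,t]$ while $\widetilde g'$ is supported on $[t,\infty)$ with $\widetilde g(0)=0$, the two summands are orthogonal in $\Hi_w$, so $\norm{f}^2_w=\norm{g}^2_w+\norm{\widetilde g}^2_w$.

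I would then estimate each piece after shifting. On the one hand $\mathcal S_t g$ is the constant function equal to $g(t)=f(t)$, so its derivative vanishes and $\norm{\mathcal S_t g}^2_w=\norm{\delta_t g}^2_{\mathcal U}\leq\norm{\delta_t}^2_{\mathrm{op}}\norm{g}^2_w\leq(1+t)\norm{g}^2_w$; crucially the factor $(1+t)$ now multiplies only $\norm{g}^2_w$. On the other hand $(\mathcal S_t\widetilde g)(0)=\widetilde g(t)=0$ and $(\mathcal S_t\widetilde g)'(x)=f'(x+t)$, so the substitution $y=x+t$ together with $w$ non-decreasing (hence $w(y-t)\leq w(y)$) gives the genuine contraction $\norm{\mathcal S_t\widetilde g}^2_w\leq\norm{\widetilde g}^2_w$. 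Since $\mathcal S_t g$ is constant and $\mathcal S_t\widetilde g$ starts at $0$, the shifted pieces stay orthogonal, whence
$$ \norm{\mathcal S_t f}^2_w = \norm{\mathcal S_t g}^2_w + \norm{\mathcal S_t\widetilde g}^2_w \leq (1+t)\norm{g}^2_w + \norm{\widetilde g}^2_w \leq (1+t)\norm{f}^2_w, $$
and $(1+t)\leq e^t$ delivers $\norm{\mathcal S_t}_{\mathrm{op}}\leq e^{t/2}$.

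The main obstacle is precisely discovering this decomposition: one must find a splitting under which the $(1+t)$ growth of $\norm{\delta_t}_{\mathrm{op}}$ is confined to the \emph{frozen} component $g$, while the remaining increment component $\widetilde g$ is truly contracted by the shift. Once the splitting is in hand, verifying the two orthogonality relations (before and after shifting) and extracting the clean tail bound from the monotonicity of $w$ are routine, but they are exactly what forces the sharp constant $e^{t/2}$ rather than the naive $e^t$.
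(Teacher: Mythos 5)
Your proposal is correct and follows essentially the same route as the paper's proof: the identical orthogonal splitting $g(x)=f(t\wedge x)$, $\widetilde g(x)=\mathbf 1_{\{t\leq x\}}(f(x)-f(t))$, the bound $\norm{\mathcal S_t g}_w^2\leq(1+t)\norm{g}_w^2$ via Lemma~\ref{dx:adjoint}, the contraction of $\mathcal S_t\widetilde g$ from the monotonicity of $w$, and orthogonality of the shifted pieces. All steps, including the final passage from $(1+t)$ to $e^{t/2}$, match the paper's argument.
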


\subsection{Conditions on the parameter functions $\mu$ and $\sigma$}\label{param:conditions}
Let us now look at sufficient conditions on the parameter functions $\mu$ and $\sigma$ in the
SVIE which ensure Lipschitz continuity and linear growth as required in Assumption~\ref{Ass:Lip}. We will assume that Assumption \ref{assump:2:functions} holds and write $\Hi_w(\mathbb R)$ when we replace $\U$ with $\mathbb R$ in the definition of $\Hi_w$, i.e.\ $\Hi_w(\mathbb R)$ is the space of absolutely continuous functions $f$ from $\mathbb R_+$ to $\mathbb R$ such that
 $$ \int_0^\infty (f'(x))^2 w(x) dx < \infty. $$

We will have to assume that 
    \begin{align*}
       x\mapsto\mu(x,t,u), \quad  x\mapsto\sigma(x,t,u)
    \end{align*}
is absolutely continuous and that they posses versions of their absolute continuous derivatives $\mu'$ and $\sigma'$ (w.r.t.\ their first variable) which are measurable as functions from $\mathbb R_+\times\mathbb R_+\times\U$ to $\U$ resp.\ $L(\U,\V)$.

\begin{proposition}
\label{prop:lipandgrowth-cond}
  Assume that there are $\ell_a,\ell_b\in \Hi_w(\mathbb R)$ with 
    \begin{align*}
\norm{\mu(t,t,0)}_{\mathcal U}&\leq \ell_a(0), \\
\norm{\mu'(t,s,0)}_{\mathcal U}&\leq \ell_a'(t-s), \\
\norm{\mu(t,t,u_1)-\mu(t,t,u_2)}_{\mathcal U}&\leq \ell_a(0)\norm{u_1-u_2}_{\mathcal U}, \\
      \norm{\mu'(t,s,u_1)-\mu'(t,s,u_2)}_{\mathcal U}&\leq \ell_a'(t-s)\norm{u_1-u_2}_\U, \\
\norm{\sigma(t,t,0)}_{\mathrm{op}}&\leq \ell_b(0), \\
\norm{\sigma'(t,s,0)}_{\mathrm{op}}&\leq \ell_b'(t-s), \\
\norm{\sigma(t,t,u_1)-\sigma(t,t,u_2)}_{\mathrm{op}}&\leq\ell_b\norm{u_1-u_2}_{\mathcal U},  \\
\norm{\sigma'(t,s,u_1)-\sigma'(t,s,u_2)}_{\mathrm{op}}&\leq\ell_b'(t-s)\norm{u_1-u_2}_{\mathcal U}
    \end{align*}
 for any $t,s\geq 0$, $u_1,u_2\in\U$. Then Assumption \ref{Ass:Lip} holds with $L_a=\|\ell_a\|^2_{\Hi_w(\mathbb R)}\|\delta_0\|_{\mathrm{op}}^2, L_b=\|\ell_b\|^2_{\Hi_w(\mathbb R)}\|\delta_0\|_{\mathrm{op}}^2, K_a=\|\ell_a\|^2_{\Hi_w(\mathbb R)}$ and $K_b=\|\ell_b\|^2_{\Hi_w(\mathbb R)}$.
\end{proposition}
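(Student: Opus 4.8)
The plan is to evaluate the $\Hi_w$-norm of the differences appearing in Assumption~\ref{Ass:Lip} explicitly and to read the four constants off term by term. Writing $u=\delta_0 h$, the element $a(t,h)\in\Hi_w$ is by definition the function $x\mapsto\mu(t+x,t,u)$, so $a(t,h)(0)=\mu(t,t,u)$. The first step is to identify its weak derivative: since $y\mapsto\mu(y,t,u)$ is absolutely continuous with first-variable derivative $\mu'(\cdot,t,u)$, the representation $\mu(t+x,t,u)=\mu(0,t,u)+\int_0^{t+x}\mu'(r,t,u)\,dr$ together with the substitution $r=t+z$ shows that $a(t,h)$ is absolutely continuous with $(a(t,h))'(x)=\mu'(t+x,t,u)$ for a.e.\ $x\geq0$; the same reasoning gives $(b(t,h)v)(x)=\sigma(t+x,t,u)v$ and $(b(t,h)v)'(x)=\sigma'(t+x,t,u)v$ for every $v\in\V$.

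With these identities in hand I would insert them into $\norm{f}_w^2=\norm{f(0)}_\U^2+\int_0^\infty w(x)\norm{f'(x)}_\U^2\,dx$. For the Lipschitz bound on $a$, with $u_i=\delta_0 h_i$,
\begin{align*}
\norm{a(t,h_1)-a(t,h_2)}_w^2 &= \norm{\mu(t,t,u_1)-\mu(t,t,u_2)}_\U^2 \\
&\quad+\int_0^\infty w(x)\,\norm{\mu'(t+x,t,u_1)-\mu'(t+x,t,u_2)}_\U^2\,dx .
\end{align*}
The decisive bookkeeping point is the collapse of the shift: the hypothesis controls the derivative difference by $\ell_a'(t-s)\norm{u_1-u_2}_\U$, and here $s=t$ while the first argument is $t+x$, so $t-s$ becomes $x$ and the integrand is dominated by $w(x)\,\ell_a'(x)^2\norm{u_1-u_2}_\U^2$. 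Adding the boundary term $\ell_a(0)^2\norm{u_1-u_2}_\U^2$, the bracket reassembles into exactly $\norm{\ell_a}_{\Hi_w(\R)}^2$, whence
$$ \norm{a(t,h_1)-a(t,h_2)}_w^2\leq \norm{\ell_a}_{\Hi_w(\R)}^2\,\norm{\delta_0}_{\mathrm{op}}^2\,\norm{h_1-h_2}_\Hi^2 , $$
using $\norm{u_1-u_2}_\U=\norm{\delta_0(h_1-h_2)}_\U\leq\norm{\delta_0}_{\mathrm{op}}\norm{h_1-h_2}_\Hi$ at the end. This is the asserted constant $L_a$, read as a bound at the level of squared norms.

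The estimate for $b$ follows the same template after testing against $v\in\V$: for fixed $v$ one bounds $\norm{\,\cdot\,}_\U\leq\norm{\,\cdot\,}_{\mathrm{op}}\norm{v}_\V$ in both the boundary and the integral term, obtaining $\norm{(b(t,h_1)-b(t,h_2))v}_w^2\leq\norm{\ell_b}_{\Hi_w(\R)}^2\norm{\delta_0}_{\mathrm{op}}^2\norm{h_1-h_2}_\Hi^2\norm{v}_\V^2$, and taking the supremum over $\norm{v}_\V\leq1$ yields the operator-norm Lipschitz bound with constant $L_b$. The growth conditions \eqref{grows:a}--\eqref{grows:b} are the same computation specialised to $h=\mathbf{0}_{\Hi}$: then $u=\delta_0\mathbf{0}_{\Hi}=0$, the zero-bounds $\norm{\mu(t,t,0)}_\U\leq\ell_a(0)$ and $\norm{\mu'(t,s,0)}_\U\leq\ell_a'(t-s)$ reassemble into $\norm{a(t,\mathbf{0}_{\Hi})}_w^2\leq\norm{\ell_a}_{\Hi_w(\R)}^2=K_a$, and analogously $\norm{b(t,\mathbf{0}_{\Hi})}_{\mathrm{op}}^2\leq\norm{\ell_b}_{\Hi_w(\R)}^2=K_b$. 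Since all four bounds are constants independent of $t$, the boundedness-on-compacts requirement of Assumption~\ref{Ass:Lip} is automatic.

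I expect the only genuine obstacle to be the rigorous justification of the derivative identity $(a(t,h))'(x)=\mu'(t+x,t,u)$ — that translation in the first argument preserves absolute continuity and commutes with the a.e.\ derivative — which rests on the change-of-variables formula for Bochner integrals together with the standing absolute-continuity and measurability hypotheses on $\mu$ and $\sigma$; one should also confirm along the way that $a(t,h)$ and $b(t,h)v$ genuinely lie in $\Hi_w$, which is precisely the finiteness of the integrals just estimated. Everything after that is a direct substitution into the explicit $\Hi_w$-norm, so no further difficulty is anticipated.
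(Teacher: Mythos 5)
Your proof follows essentially the same route as the paper's: you expand the $\Hi_w$-norm of $a(t,h_1)-a(t,h_2)$ into the boundary term plus the weighted derivative integral, bound each using the hypotheses (with the shift collapsing $t-s$ to $x$), factor in $\norm{\delta_0}_{\mathrm{op}}$ via $u_i=\delta_0h_i$, and specialise to $h=\mathbf{0}_{\Hi}$ for the growth constants, which is precisely the paper's computation. Your extra care in justifying the weak-derivative identity $(a(t,h))'(x)=\mu'(t+x,t,u)$ and in handling the operator norm of $b$ by testing against $v\in\V$ only fills in details the paper leaves implicit under ``similar arguments for $\sigma$ and $b$''.
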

\begin{proof}
  Let $t\geq 0$, $h_1,h_2\in\Hi_w$. Then we have
  \begin{align*}
     \| a(t,h_1) &- a(t,h_2) \|^2_{w} \\
       &= \|\mu(t,t,\delta_0h_1)-\mu(t,t,\delta_0h_2)\|_{\U}^2 + \int_0^\infty \left\|\mu'(t+s,t,\delta_0h_1)-\mu'(t+s,t,\delta_0h_2)\right\|_{\U}^2w(s) ds \\
      &\leq |\ell_a(0)|^2 \|\delta_0\|_{\mathrm{op}}^2\|h_1-h_2\|^2_{w} + \|\delta_0\|_{L(\Hi_w,\U)}^2\|h_1-h_2\|^2_{w}\int_0^\infty (\ell_a'(s))^2w(s) ds \\
      &= \|\ell_a\|^2_{\Hi_w(\mathbb R)}\|\delta_0\|_{\mathrm{op}}^2\|h_1-h_2\|^2_{w}.
  \end{align*}
 Also, we have
  \begin{align*}
     \| a(t,0) \|^2_{w} &= \|\mu(t,t,0)\|_{\U}^2 + \int_0^\infty \left\|\mu'(t+s,t,0)\right\|_{\U}^2w(s) ds \\
      &\leq \|\ell_a\|^2_{\Hi_w(\mathbb R)}.
  \end{align*}
  With similar arguments for $\sigma$ and $b$ we conclude that the Lipschitz and linear growth conditions are satisfied.
\end{proof}

In the next section we investigate homogeneous SVIEs and their invariant measures. By homogeneous we mean that 
$$ \mu(t,s,u) = \mu(t-s,0,u) =: \mu(t-s,u),\quad \sigma(t,s,u) = \sigma(t-s,0,u) =: \sigma(t-s,u)$$ for any $s,t\geq 0$, $u\in \U$.
We have the following corollary to Proposition \ref{prop:lipandgrowth-cond}:
\begin{corollary}
Assume that $\mu,\sigma$ are homogeneous and that there are $\ell_a,\ell_b\in\Hi_w(\mathbb R)$ with
     \begin{align*}
 \norm{\mu(0,u_1)-\mu(0,u_2)}_{\mathcal U}&\leq \ell_a(0)\norm{u_1-u_2}_\U, \\
 \norm{\mu'(t,u_1)-\mu'(t,u_2)}_{\mathcal U}&\leq \ell_a'(t)\norm{u_1-u_2}_\U, \\
 \norm{\sigma(0,u_1)-\sigma(0,u_2)}_{\mathrm{op}}&\leq \ell_b(0)\norm{u_1-u_2}_\U, \\
\norm{\sigma'(t,u_1)-\sigma'(t,u_2)}_{\mathrm{op}}&\leq\ell_b'(t)\norm{u_1-u_2}_{\mathcal U}
    \end{align*}
for any $t\geq 0$, $u_1,u_2\in\U$. Then Assumption \ref{Ass:Lip} holds.
\end{corollary}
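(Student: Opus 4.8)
The plan is to obtain the corollary as a direct specialization of Proposition~\ref{prop:lipandgrowth-cond}, using homogeneity to collapse the two time arguments into their difference. First I would record the effect of homogeneity on the coefficients: from $\mu(t,s,u)=\mu(t-s,u)$ and $\sigma(t,s,u)=\sigma(t-s,u)$, differentiation in the first slot gives $\mu'(t,s,u)=\mu'(t-s,u)$ and $\sigma'(t,s,u)=\sigma'(t-s,u)$, and in particular $\mu(t,t,u)=\mu(0,u)$ and $\sigma(t,t,u)=\sigma(0,u)$. Consequently every hypothesis appearing in Proposition~\ref{prop:lipandgrowth-cond} depends on $(t,s)$ only through $r:=t-s\geq 0$.

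Next I would match the hypotheses one by one. Reading $\mu(t,t,\cdot)$ as $\mu(0,\cdot)$ and $\mu'(t,s,\cdot)$ as $\mu'(r,\cdot)$, and likewise for $\sigma$, the four Lipschitz-in-$u$ inequalities assumed in Proposition~\ref{prop:lipandgrowth-cond} become verbatim the four inequalities assumed in the corollary, with $\ell_a'(t-s)$ and $\ell_b'(t-s)$ turning into $\ell_a'(r)$ and $\ell_b'(r)$. Thus the Lipschitz content of the Proposition's hypotheses is exactly what the corollary assumes, and no work beyond this renaming is needed for that part.

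The only point requiring an additional remark is the value-at-zero part of Proposition~\ref{prop:lipandgrowth-cond}, namely $\norm{\mu(0,0)}_{\U}\leq\ell_a(0)$, $\norm{\mu'(r,0)}_{\U}\leq\ell_a'(r)$ and the analogues for $\sigma$, which are not literally among the corollary's four displayed hypotheses. I would supply these by invoking Assumption~\ref{assump:2:functions}: in the homogeneous case (taking $t=0$, $u=0$) it guarantees $x\mapsto\mu(x,0)\in\Hi_w$ and $x\mapsto\sigma(x,0)\in L(\V,\Hi_w)$, equivalently $\int_0^\infty w(x)\norm{\mu'(x,0)}_{\U}^2\,dx<\infty$ together with the corresponding bound for $\sigma$. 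Hence, after replacing $\ell_a$ and $\ell_b$ by pointwise maxima with functions in $\Hi_w(\mathbb R)$ that also dominate these value-at-zero quantities (such maxima again lie in $\Hi_w(\mathbb R)$), all eight hypotheses of the Proposition hold; this enlargement is harmless since Assumption~\ref{Ass:Lip} only asks for the existence of suitable bounding functions.

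With all hypotheses of Proposition~\ref{prop:lipandgrowth-cond} in force, I would conclude by applying that proposition to obtain Assumption~\ref{Ass:Lip}. I expect the reduction itself to be entirely routine (the substitution $t-s\mapsto r$ carries no analytic difficulty); the only genuine bookkeeping is the growth-at-zero adjustment just described, which is the step where I would be most careful.
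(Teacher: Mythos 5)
Your overall strategy---specialize Proposition~\ref{prop:lipandgrowth-cond} by collapsing $(t,s)$ to $r:=t-s$---is exactly the reduction the paper intends, and your translation of the four difference bounds is correct. But your patch for the missing value-at-zero hypotheses contains a step that fails for $\sigma$. From Assumption~\ref{assump:2:functions} you obtain $\sigma(\cdot,0)\in L(\V,\Hi_w)$, which says $\sup_{\norm{v}_\V\leq 1}\bigl(\norm{\sigma(0,0)v}_\U^2+\int_0^\infty w(x)\norm{\partial_x\sigma(x,0)v}_\U^2\,dx\bigr)<\infty$. This is strictly weaker than what your construction of $\tilde\ell_b$ requires, namely $\int_0^\infty w(x)\norm{\sigma'(x,0)}_{\mathrm{op}}^2\,dx<\infty$: in the latter the supremum over $v$ sits \emph{inside} the integral. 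When $\V$ is infinite dimensional the two genuinely differ (take $\U=\mathbb R$, an orthonormal basis $(v_k)$ of $\V$ and $\partial_x\sigma(x,0)v_k=c_k\phi_k(x)$ with disjointly supported bumps $\phi_k$: the operator norm collects every bump, while each fixed $v$ sees only one). So a function $\tilde\ell_b\in\Hi_w(\mathbb R)$ with $\norm{\sigma'(x,0)}_{\mathrm{op}}\leq\tilde\ell_b'(x)$ need not exist, and the full set of eight hypotheses of Proposition~\ref{prop:lipandgrowth-cond} cannot in general be arranged. A second, smaller slip: the pointwise maximum of two functions in $\Hi_w(\mathbb R)$ does not dominate both derivatives (on the set where $\tilde\ell_a>\ell_a$ the maximum has derivative $\tilde\ell_a'$, which may be smaller than $\ell_a'$); you should take the sum instead, which is legitimate because your hypotheses force $\ell_a'\geq0$ and $\ell_b'\geq0$ a.e.

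The repair is to drop the detour entirely: the growth half of Assumption~\ref{Ass:Lip} does not need the proposition's pointwise value-at-zero bounds. In the homogeneous case $a(t,{\bf 0_{\Hi}})=\mu(\cdot,0)$ and $b(t,{\bf 0_{\Hi}})=\sigma(\cdot,0)$ are independent of $t$, and Assumption~\ref{assump:2:functions}, which is standing in this subsection, already places them in $\Hi_w$ and $L(\V,\Hi_w)$ respectively; hence $K_a:=\norm{\mu(\cdot,0)}_{w}$ and $K_b:=\norm{\sigma(\cdot,0)}_{\mathrm{op}}$ are finite constants and \eqref{grows:a}, \eqref{grows:b} hold trivially. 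The Lipschitz half then follows from your substitution $r=t-s$ together with the estimate in the proof of Proposition~\ref{prop:lipandgrowth-cond}, which uses only the four difference bounds you assumed. This short route is what the paper's unproved corollary presupposes, and it sidesteps both of the issues above.
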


\subsection{Limiting measure}
We are going to discuss two types of conditions corresponding to Proposition~\ref{prop:limit} and Proposition~\ref{prop:perm abstract} in the abstract setting that ensure a limiting measure for the SVIE. Both are written in terms of the long-term behavior of the coefficients, i.e.\ on $\lim_{t\rightarrow \infty} (\mu(t,\cdot),\sigma(t,\cdot))$. They are tailored to our specific choice of space $\Hi_w$ from Section \ref{examples:H} and make use of the fact that the elements in $\Hi_w$ have a 'value at infinity' which will allow us to identify these limits. This idea is adopted from  
Tehranchi \citep{Tehranchi2005}. In order to make this rigorous we need the following
\begin{lemma}\label{lem:infinity}
  Let $h\in \Hi_w$. Then $h'\in L^1(\mathbb R_+,\U)$.
\end{lemma}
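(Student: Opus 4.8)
The plan is to prove the claimed integrability by a single application of the Cauchy--Schwarz inequality, exploiting the fact that the two integrability properties of $w$ built into the definition of $\Hi_w$ — namely that $w$ is the weight against which $\norm{h'}^2_\U$ is integrable, and that $w^{-1}\in L^1(\mathbb{R}_+)$ — are exactly the two factors one needs.

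First I would fix $h\in\Hi_w$ and recall that, by definition of $\Hi_w$, the function $h$ is absolutely continuous with $ds$-a.e.\ unique derivative $h'$, so that $x\mapsto\norm{h'(x)}_\U$ is a nonnegative measurable scalar function and $\int_0^\infty w(x)\norm{h'(x)}_\U^2\,dx\leq\norm{h}_w^2<\infty$. Then I would use the pointwise factorisation $\norm{h'(x)}_\U = \bigl(\sqrt{w(x)}\,\norm{h'(x)}_\U\bigr)\cdot w(x)^{-1/2}$ and apply the scalar Cauchy--Schwarz inequality (H\"older with $p=q=2$) to obtain
$$ \int_0^\infty \norm{h'(x)}_\U\,dx \leq \Bigl(\int_0^\infty w(x)\norm{h'(x)}_\U^2\,dx\Bigr)^{1/2}\Bigl(\int_0^\infty w(x)^{-1}\,dx\Bigr)^{1/2}. $$

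To conclude I would bound the first factor by $\norm{h}_w$ and the second by $\norm{w^{-1}}_{L^1(\mathbb{R}_+)}^{1/2}$, both finite — the latter precisely by the standing assumption $w^{-1}\in L^1(\mathbb{R}_+)$ on the weight. This gives $\int_0^\infty\norm{h'(x)}_\U\,dx\leq\norm{h}_w\,\norm{w^{-1}}_{L^1(\mathbb{R}_+)}^{1/2}<\infty$, which is exactly the assertion $h'\in L^1(\mathbb{R}_+,\U)$. There is no serious obstacle here; the only point deserving a word of care is that Cauchy--Schwarz is applied to the scalar functions $\sqrt{w}\,\norm{h'}_\U$ and $w^{-1/2}$ rather than to the $\U$-valued map $h'$ itself, and that the measurability of $x\mapsto\norm{h'(x)}_\U$ is inherited from the Bochner-measurability of $h'$.
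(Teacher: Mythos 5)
Your proof is correct and is essentially identical to the paper's: both apply the scalar Cauchy--Schwarz inequality to the factorisation $\norm{h'(x)}_{\U} = \bigl(w(x)^{1/2}\norm{h'(x)}_{\U}\bigr)\, w(x)^{-1/2}$, bounding the two resulting factors by $\int_0^\infty w(x)\norm{h'(x)}_{\U}^2\,dx \leq \norm{h}_w^2 < \infty$ and by the standing assumption $w^{-1}\in L^1(\mathbb{R}_+)$. Your additional remarks on measurability of $x\mapsto\norm{h'(x)}_{\U}$ are a harmless (correct) elaboration that the paper leaves implicit.
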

\begin{proof}
Cauchy-Schwarz inequality yields
  \begin{align*}
\left(\int_{0}^{\infty} \norm{h'(x)}_{\U} dx \right)^2&=\left(\int_{0}^{\infty} \norm{h'(x)}_{\U} \frac{w(x)^{1/2}}{w(x)^{1/2}}dx \right)^2 \\
& \leq \left(\int_{0}^{\infty} \norm{h'(x)}_{\U}^2 w(x) dx \right)\left(\int_{0}^{\infty} w(x)^{-1} dx \right)<\infty.
\end{align*}
 Thus $h'\in L^1(\mathbb R_+,\U)$.
\end{proof}

\begin{proposition}\label{p:infinity}
  For any $h\in \Hi_w$ the limit of $h(t)$ for $t\rightarrow \infty$ exists, the linear map
   $$ \delta_\infty h := \lim_{t\rightarrow\infty} h(t) $$
  is an element of $L(\Hi_w,\U)$ and
   $$ \| h\|^2_{w,\infty} := \| h\|^2_{w} - \|h(0)\|_{\U}^2 + \|\delta_\infty(h)\|_{\U}^2 $$
  defines an equivalent Hilbert-space norm on $\Hi_w$ with scalar product
   $$ \<h,g\>_{w,\infty } = \<\delta_\infty h,\delta_\infty g\>_U + \int_0^\infty \<h'(s),g'(s)\>_\U w(s) ds,\quad f,g\in \Hi_w. $$
  Moreover, the operator norm of $\delta_0$ relative to the $\|\cdot\|_{w,\infty}$-norm is equal to $\sqrt{1+\int_0^\infty\frac1{w(s)}ds}$ and the operator norm of $\delta_0$ restricted to 
  $$ \Hi_w^0 := \{h\in\Hi_w: \delta_\infty h = 0\} $$
  is given by $\sqrt{\int_0^\infty\frac1{w(s)}ds}$.
\end{proposition}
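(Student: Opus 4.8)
The plan is to verify the five assertions one at a time, in each case reducing the claim to the absolute-continuity structure of $\Hi_w$ together with the weighted Cauchy--Schwarz estimate already exploited in Lemma~\ref{lem:infinity}. Throughout I would write $\kappa := \int_0^\infty w(s)^{-1}\,ds$, which is finite by the standing assumption $w^{-1}\in L^1(\mathbb{R}_+)$. First I would settle existence of $\delta_\infty$. By Lemma~\ref{lem:infinity} one has $h'\in L^1(\mathbb{R}_+,\U)$, and since $h\in AC(\mathbb{R}_+,\U)$ the representation $h(x)=h(0)+\int_0^x h'(t)\,dt$ holds as in the proof of Lemma~\ref{integral:scalarproduct}; letting $x\to\infty$, the Bochner integral $\int_0^\infty h'(t)\,dt$ converges, so $\delta_\infty h = h(0)+\int_0^\infty h'(t)\,dt$ exists. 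Linearity is immediate, and boundedness follows from $\norm{\int_0^\infty h'\,dt}_\U\le\int_0^\infty\norm{h'(t)}_\U\,dt\le\sqrt{\kappa}\,\big(\int_0^\infty w\norm{h'}_\U^2\big)^{1/2}\le\sqrt{\kappa}\,\norm{h}_w$ together with the continuity of $\delta_0$; hence $\delta_\infty\in L(\Hi_w,\U)$.

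For the equivalent-norm claim I would first check that the proposed bilinear form is an inner product: symmetry and bilinearity are clear, and $\<h,h\>_{w,\infty}=\norm{\delta_\infty h}_\U^2+\int_0^\infty w\norm{h'}_\U^2$ vanishes only when $h'=0$ a.e.\ and $\delta_\infty h=0$, which forces $h\equiv h(0)=\delta_\infty h=0$. Substituting $\norm{h}_w^2=\norm{h(0)}_\U^2+\int_0^\infty w\norm{h'}_\U^2$ into the stated definition shows $\<h,h\>_{w,\infty}=\norm{h}_{w,\infty}^2$. Equivalence of the two norms then comes from the identity $h(0)=\delta_\infty h-\int_0^\infty h'$ and the Cauchy--Schwarz bound above: applying $\norm{a+b}^2\le 2\norm{a}^2+2\norm{b}^2$ gives $\norm{h(0)}_\U^2\le 2\norm{\delta_\infty h}_\U^2+2\kappa\int_0^\infty w\norm{h'}^2$ and, symmetrically, $\norm{\delta_\infty h}_\U^2\le 2\norm{h(0)}_\U^2+2\kappa\int_0^\infty w\norm{h'}^2$, which together yield two-sided estimates $c\,\norm{h}_w\le\norm{h}_{w,\infty}\le C\,\norm{h}_w$. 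Completeness of $(\Hi_w,\norm{\cdot}_{w,\infty})$ is then inherited from completeness in $\norm{\cdot}_w$, so it is a Hilbert space.

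The quantitative operator-norm statements are the heart of the matter, and I would obtain the \emph{exact} constants through the adjoint rather than through inequalities. Relative to $\<\cdot,\cdot\>_{w,\infty}$ I claim that the adjoint of $\delta_0$ is $u\mapsto g_u$ with $g_u(x)=\big(1+\int_x^\infty w^{-1}\big)u$, so that $g_u'(x)=-w(x)^{-1}u$ and $\delta_\infty g_u=u$. A direct computation then gives $\<h,g_u\>_{w,\infty}=\<\delta_\infty h,u\>_\U-\int_0^\infty\<h'(s),u\>_\U\,ds=\<\delta_\infty h-\int_0^\infty h',u\>_\U=\<h(0),u\>_\U=\<\delta_0 h,u\>_\U$, confirming the adjoint relation; and $\norm{g_u}_{w,\infty}^2=\norm{u}_\U^2+\int_0^\infty w(s)\,w(s)^{-2}\norm{u}_\U^2\,ds=(1+\kappa)\norm{u}_\U^2$, so, since an operator and its adjoint have equal operator norm, $\norm{\delta_0}_{\mathrm{op}}=\sqrt{1+\kappa}$. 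For the restriction to $\Hi_w^0$ the $\delta_\infty$-term drops, leaving $\norm{h}_{w,\infty}^2=\int_0^\infty w\norm{h'}^2$ and $\norm{\delta_0 h}_\U=\norm{\int_0^\infty h'}_\U\le\sqrt{\kappa}\,\norm{h}_{w,\infty}$ by Cauchy--Schwarz; equality is attained by $h(x)=-\big(\int_x^\infty w^{-1}\big)e$ for a unit vector $e\in\U$, which indeed satisfies $\delta_\infty h=0$ and $h\in\Hi_w$, yielding the value $\sqrt{\kappa}$.

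I expect the main obstacle to be exactly this last point, namely \emph{sharpness}: the first three parts are standard once $h'\in L^1$ is in hand, but turning the Cauchy--Schwarz inequalities into equalities—by producing the explicit adjoint $g_u$ and the explicit extremizer $h(x)=-\big(\int_x^\infty w^{-1}\big)e$ in $\Hi_w^0$—is what pins down the constants $\sqrt{1+\kappa}$ and $\sqrt{\kappa}$ rather than mere upper bounds. Everything else is routine verification that the candidate objects lie in $\Hi_w$ and that the integrals converge, which follows from $w^{-1}\in L^1(\mathbb{R}_+)$ and $w\ge 1$.
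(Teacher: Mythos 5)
Your proposal is correct and takes essentially the same route as the paper: existence of $\delta_\infty$ via Lemma~\ref{lem:infinity}, and the exact constant $\sqrt{1+\int_0^\infty w^{-1}(s)\,ds}$ obtained by exhibiting the explicit adjoint $u\mapsto\bigl(1+\int_{\cdot}^\infty w^{-1}(s)\,ds\bigr)u$ of $\delta_0$ with respect to $\langle\cdot,\cdot\rangle_{w,\infty}$, which is precisely the paper's map $\phi$. The only cosmetic differences are that you bound $\delta_\infty$ by a direct Cauchy--Schwarz estimate where the paper invokes the uniform boundedness principle together with Lemma~\ref{dx:adjoint}, and that on $\Hi_w^0$ you establish sharpness via the extremizer $-\bigl(\int_{\cdot}^\infty w^{-1}(s)\,ds\bigr)e$, which is exactly the paper's adjoint $\psi$ applied to a unit vector, so the two arguments coincide in substance.
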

\begin{proof}
   Lemma \ref{lem:infinity} implies that $h(t) = h(0) + \int_0^t h'(s) ds \rightarrow h(0) + \int_0^\infty h'(s) ds = \delta_\infty h$ for $t\rightarrow\infty$. $\delta_\infty$ is the everywhere defined pointwise limit of $\delta_t$ for $t\rightarrow\infty$ and, hence, the uniform boundedness principle yields that $\delta_\infty\in L(\Hi_w,\U)$ and in fact Lemma \ref{dx:adjoint} yields $\|\delta_\infty\|_{\mathrm{op}} = 1 + \int_0^\infty \frac{1}{w(s)}ds$. Obviously, $\| \cdot\|_{w,\infty}$ defines a Hilbert-space norm equivalent to $\|\cdot\|_{w}$ with
    $$ \| \cdot\|^2_{w,\infty} \leq \|\delta_\infty\|^2_{\mathrm{op}} \|\cdot\|^2_{w}, \quad  \| \cdot\|^2_{w} \leq \|\delta_\infty\|^2_{\mathrm{op}} \|\cdot\|^2_{w,\infty}$$
    where $\|\delta_\infty\|^2_{\mathrm{op}}$ denotes the operator norm of $\delta_\infty$ relative to the the $\|\cdot\|_w$-norm. 
    
  Now, we define 
   $$\phi:\U\rightarrow \Hi, u \mapsto \left(\mathbb R\ni x\mapsto (1+\int_x^\infty \frac{1}{w(s)}ds)u\right).$$ 
 We have
   $$ \<\phi(u),f\>_{w,\infty} = \<u,f(\infty)\>_\U - \int_0^\infty \<u,f'(s)\>_\U ds = \<u,f(0)\> = \<u,\delta_0f\>_\U $$
   for any $f\in \Hi_w$, $u\in\U$. Thus, $\phi = \delta_0^*$ relative to the $\<\cdot,\cdot\>_{w,\infty}$-scalar product. The operator-norm of $\delta_0$ in the $\|\cdot\|_{w,\infty}$-norm equals the operator norm of $\phi$ which is given by
    $$ \| \phi\|_{\mathrm{op}}^2  = 1 + \int_0^\infty \frac{1}{w(s)}ds.$$

For the last part we work on the smaller space $\Hi_w^0$ and define
 $$ \psi:\U\rightarrow \Hi_w^0, u \mapsto \left(\mathbb R\ni x\mapsto (\int_x^\infty \frac{1}{w(s)}ds)u\right).$$
  We have
    $$ \<\psi(u),f\>_{w,\infty} =\<\phi(u),f\>_{w,\infty} = \<u,\delta_0f\>_\U $$
  for any $f\in \Hi_w^0$ where the first equality follows from the fact that $f(\infty)=0$. The operator norm of $\psi$ equals the operator norm of $\delta_0$ restricted to $\Hi_w^0$ and, hence, we find the claimed formula for its operator norm.
\end{proof}
 We will use the notation $h(\infty):=\delta_\infty h$ for any $h\in \Hi_w$ and the semi-norm $\|h\|_0^2 := \|h\|_{w,\infty}^2 - \|h(\infty)\|_\U^2$ for $h\in \Hi_w$. Since $a(h)$ is assumed to be in $\Hi_w$ we may write $\mu(\infty,u) := a(\bar u)(\infty)$ where $\Hi_w\ni \bar u:\mathbb R_+\rightarrow\U, t\mapsto u$. Also we write $\sigma(\infty,u):=b(\bar u)(\infty)$ and both are simply the long-term limits, i.e. $\mu(\infty,u) = \lim_{t\rightarrow \infty}\mu(t,u)$ for $u\in \U$.

The advantage of the $\|\cdot\|_{w,\infty}$-norm lies in the fact that it orthogonalises the kernel of the generator of $\partial_x$ and the space $\Hi_w^0$ which is $\mathcal S$-invariant.
\begin{lemma}\label{lem:contraction}
  Let $\mathcal C\subseteq \Hi_w$ be the set of constant functions. Then $\mathcal C$ is orthogonal to $\Hi_w^0$ in the Hilbert space $(\Hi_w,\|\cdot\|_{w,\infty})$. Moreover, we have
   $$ \|\mathcal S_t|_{\Hi_w^0}\|_{\mathrm{op}} \leq e^{-\alpha_w/2 t} $$
   where $\alpha_w := \inf_{x\geq 0} \frac{w' (x)}{w(x)}\geq 0$ and we have
   $$ \|\mathcal S_t\|_{\mathrm{op}} \leq 1 $$
   where $\|\cdot\|_{\mathrm{op}}$ denotes the operator norm on $\Hi_w^0$ and $\Hi_w$ relative to the $\|\cdot\|_{w,\infty}$-norm respectively.
\end{lemma}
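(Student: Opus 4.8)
The plan is to establish the three assertions in turn, treating the orthogonality and the final $\|\mathcal S_t\|_{\mathrm{op}}\le 1$ bound as short consequences of the scalar product formula in Proposition~\ref{p:infinity}, with the decay estimate on $\Hi_w^0$ being the technical core.

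First, for the orthogonality of $\mathcal C$ and $\Hi_w^0$, I would take a constant function $c\equiv u\in\U$ and any $h\in\Hi_w^0$. By the expression for $\<\cdot,\cdot\>_{w,\infty}$ from Proposition~\ref{p:infinity},
$$ \<c,h\>_{w,\infty} = \<\delta_\infty c,\delta_\infty h\>_\U + \int_0^\infty \<c'(s),h'(s)\>_\U w(s)\,ds, $$
and both terms vanish: $\delta_\infty h=0$ by definition of $\Hi_w^0$, while $c'\equiv 0$ since $c$ is constant. Hence $\mathcal C\perp\Hi_w^0$.

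Next, the contraction estimate on $\Hi_w^0$, which is the heart of the argument. Fix $h\in\Hi_w^0$ and $t\ge 0$. Since $\delta_\infty(\mathcal S_th)=\lim_{x\to\infty}h(x+t)=\delta_\infty h=0$, the space $\Hi_w^0$ is $\mathcal S_t$-invariant and the value-at-infinity term in $\|\cdot\|_{w,\infty}$ drops out, so that with $(\mathcal S_th)'(x)=h'(x+t)$ and the substitution $y=x+t$,
$$ \|\mathcal S_th\|_{w,\infty}^2 = \int_0^\infty w(x)\norm{h'(x+t)}_\U^2\,dx = \int_t^\infty w(y-t)\norm{h'(y)}_\U^2\,dy. $$
The crux is the pointwise bound $w(y-t)\le e^{-\alpha_w t}w(y)$ for $y\ge t$. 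This follows by reading $\alpha_w$ as the infimum of the logarithmic derivative of $w$, which is legitimate since $w\in C^1(\R_+)$ with $w\ge 1>0$, and integrating:
$$ \log\frac{w(y)}{w(y-t)} = \int_{y-t}^y \frac{w'(x)}{w(x)}\,dx \ge \alpha_w t. $$
Substituting this bound and using $w\ge 0$ to extend the domain of integration back to $[0,\infty)$ (legitimate because $e^{-\alpha_w t}\le 1$, as $\alpha_w\ge 0$ from $w$ being non-decreasing) yields $\|\mathcal S_th\|_{w,\infty}^2\le e^{-\alpha_w t}\|h\|_{w,\infty}^2$, i.e.\ the claimed $\|\mathcal S_t|_{\Hi_w^0}\|_{\mathrm{op}}\le e^{-\alpha_w t/2}$.

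Finally, for the bound on all of $\Hi_w$, I would split $h = c_h + h_0$, where $c_h\equiv\delta_\infty h$ is the constant function carrying the value at infinity and $h_0:=h-c_h\in\Hi_w^0$ because $\delta_\infty h_0 = 0$. Since $\mathcal S_t c_h = c_h$ and $\mathcal S_t h_0\in\Hi_w^0$, the orthogonality from the first step gives a Pythagorean splitting both before and after applying $\mathcal S_t$, so
$$ \|\mathcal S_th\|_{w,\infty}^2 = \|c_h\|_{w,\infty}^2 + \|\mathcal S_th_0\|_{w,\infty}^2 \le \|c_h\|_{w,\infty}^2 + \|h_0\|_{w,\infty}^2 = \|h\|_{w,\infty}^2, $$
where the inequality uses the previous step together with $\alpha_w\ge 0$. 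This gives $\|\mathcal S_t\|_{\mathrm{op}}\le 1$ on $\Hi_w$ (and a fortiori on $\Hi_w^0$). The only genuine obstacle is the logarithmic-derivative estimate for $w$; everything else is bookkeeping with the orthogonal decomposition $\Hi_w=\mathcal C\oplus\Hi_w^0$ furnished by the $\|\cdot\|_{w,\infty}$-geometry.
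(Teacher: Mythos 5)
Your proposal is correct and takes essentially the same route as the paper: the same substitution $y=x+t$ combined with the pointwise weight comparison $w(y-t)\leq e^{-\alpha_w t}w(y)$ on $\Hi_w^0$, followed by the same orthogonal splitting $h=\delta_\infty h+(h-\delta_\infty h)$ and Pythagoras to get $\|\mathcal S_t\|_{\mathrm{op}}\leq 1$ on all of $\Hi_w$. You merely make explicit two points the paper leaves implicit (the logarithmic-derivative integration behind the weight comparison and the orthogonality computation), which is fine; the parenthetical invoking $e^{-\alpha_w t}\leq 1$ when extending the integration domain is superfluous, since nonnegativity of the integrand suffices there.
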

\begin{proof}
  Let $h\in \Hi_w^0$. We have
   \begin{align*}
      \|\mathcal S_th\|^2_{w,\infty} &= \int_0^\infty \|h'(s+t)\|_\U^2 w(s) ds \\
                                      &\leq e^{-\alpha_wt}\int_0^\infty \|h'(s+t)\|_\U^2 w(s+t) ds  \\
                                      & = e^{-\alpha_wt}\int_t^\infty \|h'(s)\|_\U^2 w(s) ds \\
                                      &\leq e^{-\alpha_wt} \|h\|_{0}
   \end{align*}
   for any $t\geq 0$. Moreover, for $f\in \Hi_w$ and $h := f - f(\infty)$ we find that
    \begin{align*}
       \|\mathcal S_tf\|^2_{w,\infty} &= \|f(\infty)\|_\U^2 +  \|\mathcal S_th\|^2_{w,\infty} \\
        &\leq \|f(\infty)\|_\U^2 + e^{-\alpha_wt} \|f\|_0^2 \\
        &\leq \|f\|_{w,\infty}^2
    \end{align*}
    for any $t\geq 0$.
\end{proof}

We can now state our main results for the existence of invariant laws for the SVIE \eqref{V:SDE:hom}. The first is about the case when the impact of a push in direction $a(h)$ or $b(h)$ vanishes over time. The second theorem, Theorem \ref{inv:distribution 2}, covers cases where these impacts do not vanish at infinity.
\begin{theorem}\label{inv:distribution}
Let $w\in C^1 (\mathbb{R}_+, \mathbb{R}_+)$, assume $\alpha_w =\inf_{x\geq 0} \frac{w' (x)}{w(x)}>0$ and that there are $L_a,L_b\geq 0$ with
\begin{align}
\norm{a(h_1) - a(h_2)}_{w,\infty} &\leq   L_a \norm{h_1-h_2}_{w,\infty}\label{Lip:1} \\
\norm{b(h_1) - b(h_2)}_{\mathrm{op}} &\leq  L_b \norm{h_1-h_2}_{w,\infty} \label{Lip:2} \\
L_b^2 + 2L_a &< \alpha_w
\end{align}
for any $h_1,h_2\in \Hi_w^0$. We also assume that 
  \begin{align*}
      \mu(\infty,u) = 0,\quad \sigma(\infty,u) = 0.
  \end{align*}
Then there exists a probability measure $\Gamma$ on $\U$, which depends on the law of $X(0)$, such that $P^{X(t)}$ converges weakly to $\Gamma$ as $t\rightarrow\infty$.
\end{theorem}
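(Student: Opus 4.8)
The plan is to deduce this from the abstract Proposition~\ref{prop:limit}, applied to the space $\Hi_w$ re-normed with the equivalent inner product $\langle\cdot,\cdot\rangle_{w,\infty}$ from Proposition~\ref{p:infinity}. Switching to $\|\cdot\|_{w,\infty}$ is harmless for all the structural requirements: equivalent norms induce the same Borel sets, so the measurability in Assumption~\ref{assump:2:functions} is unaffected; $\delta_0$ stays bounded; all constant functions lie in $\Hi_w$ (they have vanishing derivative), so the closedness condition in Assumption~\ref{assump:1:space} holds with range $\U$; and by Lemma~\ref{lem:contraction} the shift semigroup becomes a genuine contraction in the new norm. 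Hence Theorem~\ref{Volterra:sol} still furnishes the unique \cadlag solution $X(t)=\delta_0Y(t)$ (the embedded initial value $\bar{x_0}$ is square-integrable since $\|\bar{x_0}\|_{w,\infty}=\|x_0\|_\U$), and Proposition~\ref{prop:limit} is available in this setting with the decay rate $\alpha$ played by $\alpha_w$.

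First I would fix the orthogonal splitting required by Proposition~\ref{prop:limit}. I take $\mathcal C$ to be the constant functions and $\mathcal B:=\Hi_w^0=\{h\in\Hi_w:\delta_\infty h=0\}$. Every $f\in\Hi_w$ decomposes as $f=f(\infty)+(f-f(\infty))$ with $f(\infty)\in\mathcal C$ and $f-f(\infty)\in\Hi_w^0$, and by Lemma~\ref{lem:contraction} these summands are $\langle\cdot,\cdot\rangle_{w,\infty}$-orthogonal; thus $\mathcal B=\mathcal C^\perp$. Constant functions have vanishing right-derivative, so $\mathcal C\subseteq\{h:\partial_xh=0\}$, and $\Hi_w^0$ is $\mathcal S$-invariant (as already used in Lemma~\ref{lem:contraction}).

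Next I would verify conditions (i)--(v) of Proposition~\ref{prop:limit} with $\alpha=\alpha_w$. The crucial one is (i): since $a(h)=\mu(\cdot,\delta_0h)$ and $b(h)v=\sigma(\cdot,\delta_0h)v$, the hypotheses $\mu(\infty,u)=0$ and $\sigma(\infty,u)=0$ give $\delta_\infty(a(h))=\mu(\infty,\delta_0h)=0$ and $\delta_\infty(b(h)v)=\sigma(\infty,\delta_0h)v=0$ for all $h\in\Hi_w$, $v\in\V$, i.e.\ $a$ is $\Hi_w^0$-valued and $b$ is $L(\V,\Hi_w^0)$-valued. Condition (ii) is exactly the decay estimate $\|\mathcal S_t|_{\Hi_w^0}\|_{\mathrm{op}}\le e^{-\alpha_wt/2}$ of Lemma~\ref{lem:contraction}; conditions (iii) and (iv) are the assumed Lipschitz bounds \eqref{Lip:1} and \eqref{Lip:2} on $\Hi_w^0$; and (v) is the standing inequality $2L_a+L_b^2<\alpha_w$. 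Proposition~\ref{prop:limit} then yields, for each deterministic $x_0\in\U$, a limiting law $\nu_{x_0}$ for $X(t)$; since $\mathcal C\ne\{0\}$ in general, this law genuinely depends on $x_0$.

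Finally I would pass from a deterministic to a random, square-integrable, $\mathcal F_0$-measurable initial value. Because the future increments of $L$ are independent of $\mathcal F_0$, conditioning on $X(0)=x_0$ and integrating against its law $\rho:=P^{X(0)}$ gives $\Gamma:=\int_\U\nu_{x_0}\,\rho(dx_0)$ with $P^{X(t)}\Rightarrow\Gamma$, a measure depending on the law of $X(0)$ as claimed. The main obstacle I anticipate is the bookkeeping at the level of norms, namely verifying that every hypothesis of Proposition~\ref{prop:limit} and of the existence theory survives the change to the $\|\cdot\|_{w,\infty}$-norm, and translating the pointwise vanishing conditions $\mu(\infty,\cdot)=\sigma(\infty,\cdot)=0$ into the operator-level statement that $b$ takes values in $L(\V,\Hi_w^0)$ uniformly; the conditioning step is routine given the affine structure of $X(t)$ in $x_0$ exhibited in the proof of Proposition~\ref{prop:limit}.
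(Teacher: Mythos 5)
Your proposal is correct and follows essentially the same route as the paper: apply Proposition~\ref{prop:limit} in $(\Hi_w,\|\cdot\|_{w,\infty})$ with $\mathcal C$ the constant functions (the kernel of $\partial_x$) and $\mathcal B=\Hi_w^0$ its orthogonal complement, using $\mu(\infty,\cdot)=\sigma(\infty,\cdot)=0$ for condition (i), Lemma~\ref{lem:contraction} for the contraction bound (ii), and the assumed Lipschitz constants for (iii)--(v). Your extra bookkeeping on the equivalent norm and the conditioning step for a random initial value merely makes explicit what the paper leaves implicit (the latter already being handled inside the proof of Proposition~\ref{prop:limit}).
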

\begin{proof}
 We apply Proposition \ref{prop:limit} with $\mathcal C=\{h\in\Hi_w:\partial_xh=0\}$. The orthogonal complement of $\mathcal C$ in $(\Hi_w,\|\cdot\|_{w,\infty})$ is $\Hi_w^0$. The coefficient $a$ is $\Hi_w^0$-valued and $b$ is $L(\V,\Hi_w^0)$-valued due to the assumptions. Lemma \ref{lem:contraction} yields that $\|\mathcal S_t|_{\Hi_w^0}\|_{\mathrm{op}} \leq e^{-\alpha_w t/2}$ and conditions (iii) to (v) of Proposition \ref{prop:limit} are met with the given constants $L_a,L_b$.
\end{proof}
A common choice of weight function for the Filipovi\'c space is $w(x)=\exp(\alpha x)$ for some constant $\alpha>0$. Then in Theorem \ref{inv:distribution} we find $\alpha_w=\alpha>0$. This also demonstrates that the weight function puts restrictions on the Lipschitz constants, as
$L_b^2/2+L_a<\alpha$. The bigger we choose $\alpha$, the more generously we can choose Lipschitz functions,
but on the other hand the stronger assumptions we put on the asymptotic behaviour towards zero of the derivative of the elements in $\mathcal H_w$ as $x\rightarrow\infty$. Of course, for such choice of
$w$, we have that $w^{-1}(x)=\exp(-\alpha x)\in L^1(\mathbb R_+)$.

\begin{theorem}\label{inv:distribution 2}
  Let $w\in C^1(\mathbb R_+,\mathbb R_+)$, assume $\alpha_w =\inf_{x\geq 0} \frac{w' (x)}{w(x)}>0$ and that there are $L_a,L_b>0$ and that there is $\beta\in(0,\alpha_w/2]$ with
\begin{align*}
\norm{a(g) - a(h)}_{0} &\leq   L_a \norm{g-h}_{w,\infty} \\
\norm{b(g) - b(h)}_{\mathrm{op}} &\leq  L_b \norm{g-h}_{w,\infty} \\
\< \delta_{\infty}(a(g)-a(h)),g(\infty)-h(\infty)\>_\U &\leq -\beta \|g(\infty)-h(\infty)\|_\U^2
\end{align*}
for any $g,h\in\Hi_w$ with $2L_a+L_b^2 < 2\beta$. Then there is a probability measure $\Gamma$ on $\U$ which does not depend on the law of $X(0)$, such that $P^{X(t)}$ converges weakly to $\Gamma$ as $t\rightarrow\infty$.
\end{theorem}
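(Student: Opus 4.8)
The plan is to apply Proposition~\ref{prop:perm abstract} on the Hilbert space $(\Hi_w,\|\cdot\|_{w,\infty})$, choosing the two orthogonal projections so as to separate the constant functions from $\Hi_w^0$. Concretely, I would take $\pi_0$ to be the orthogonal projection onto the constants $\mathcal C$ and $\pi_1$ the orthogonal projection onto $\Hi_w^0$; by Lemma~\ref{lem:contraction} these two subspaces are orthogonal in the $\|\cdot\|_{w,\infty}$-norm, so $\pi_0+\pi_1$ is the identity as required. The identification that drives the whole argument is that in this norm $\pi_0 g$ is the constant function with value $g(\infty)=\delta_\infty g$ and $\pi_1 g = g-g(\infty)\in\Hi_w^0$. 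Indeed, for a constant function $c$ and any $f\in\Hi_w$ one has $\langle c,f\rangle_{w,\infty}=\langle \delta_\infty c,\delta_\infty f\rangle_\U$ since the derivative term vanishes, which simultaneously yields $\|\pi_0(g-h)\|_{w,\infty}^2=\|g(\infty)-h(\infty)\|_\U^2$ and $\|\pi_1 f\|_{w,\infty}^2=\int_0^\infty\|f'(s)\|_\U^2 w(s)\,ds=\|f\|_0^2$.

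With these projections fixed, I would verify the six hypotheses of Proposition~\ref{prop:perm abstract} with $\gamma=0$ and the Proposition's contraction-and-reversion constant taken to be the theorem's $\beta$. For (i): the shift semigroup fixes constants and leaves $\Hi_w^0$ invariant, so $\pi_0\mathcal S_t g=\pi_0 g$ and the bound holds with $\gamma=0$. For (ii): Lemma~\ref{lem:contraction} gives $\|\mathcal S_t|_{\Hi_w^0}\|_{\mathrm{op}}\leq e^{-\alpha_w t/2}$, and since $\beta\leq\alpha_w/2$ this implies $\|\pi_1\mathcal S_t g\|_{w,\infty}\leq e^{-\beta t}\|\pi_1 g\|_{w,\infty}$. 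For (iii): using the identity above, $\langle\pi_0(a(g)-a(h)),g-h\rangle_{w,\infty}=\langle\delta_\infty(a(g)-a(h)),g(\infty)-h(\infty)\rangle_\U$, which is $\leq -\beta\|g(\infty)-h(\infty)\|_\U^2=-\beta\|\pi_0(g-h)\|_{w,\infty}^2$ by the mean-reversion assumption. For (iv): $\|\pi_1(a(g)-a(h))\|_{w,\infty}=\|a(g)-a(h)\|_0\leq L_a\|g-h\|_{w,\infty}$ is precisely the first assumed bound. Condition (v) is the assumed Lipschitz bound on $b$ verbatim, and condition (vi) reads $0+2L_a+L_b^2<2\beta$, which is the standing hypothesis. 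Proposition~\ref{prop:perm abstract} then delivers a limiting law for $X$ independent of the initial value, which is the claim.

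The main difficulty is not analytic but organisational: the hypotheses of Theorem~\ref{inv:distribution 2} are phrased in three distinct quantities — the full norm $\|\cdot\|_{w,\infty}$, the seminorm $\|\cdot\|_0$, and the boundary functional $\delta_\infty$ — and each must be matched to the correct abstract condition. The one point requiring genuine care is condition (iii), where the mean-reversion hypothesis is a scalar product in $\U$ paired against $g(\infty)-h(\infty)$, whereas the Proposition demands a scalar product in $\Hi_w$ paired against $g-h$; the reconciliation rests entirely on the collapse $\langle c,f\rangle_{w,\infty}=\langle\delta_\infty c,\delta_\infty f\rangle_\U$ for constant $c$, together with the fact that $\mathcal S_t$ fixes $\mathcal C$ so that no growth occurs on the constant component ($\gamma=0$). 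A subsidiary check is that $a$ is genuinely $\Hi_w$-valued, so that $\delta_\infty a(\cdot)$ and $\|a(\cdot)\|_0$ are well defined; this is guaranteed by Assumption~\ref{assump:2:functions}.
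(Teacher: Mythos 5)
Your proposal is correct and takes essentially the same route as the paper's own proof: the paper likewise applies Proposition~\ref{prop:perm abstract} on $(\Hi_w,\|\cdot\|_{w,\infty})$ with $\pi_0 f = (x\mapsto f(\infty))$ and $\pi_1 f = f - f(\infty)$, takes $\gamma=0$ since $\mathcal S_t$ fixes constants, gets (ii) from Lemma~\ref{lem:contraction} together with $\beta\leq\alpha_w/2$, and reduces (iii) to the mean-reversion hypothesis via exactly the collapse $\langle \pi_0 h, f\rangle_{w,\infty}=\langle\delta_\infty h,\delta_\infty f\rangle_\U$. Your handling of (iv) through the identity $\|\pi_1(a(g)-a(h))\|_{w,\infty}=\|a(g)-a(h)\|_0$ (i.e.\ $\delta_\infty\pi_1=0$) and of (v)--(vi) also matches the paper's argument.
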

\begin{proof}
  We like to apply Proposition \ref{prop:perm abstract} with the projectors
   \begin{align*}
     \pi_0&:\Hi_w\rightarrow \Hi_w, f\mapsto (x\mapsto f(\infty)), \\
     \pi_1&:\Hi_w\rightarrow \Hi_w, f\mapsto (x\mapsto f(x)-f(\infty)).
\end{align*}    
  Obviously, $\pi_0+\pi_1$ is the identity operator, they are orthogonal projections on $(\Hi_{w},\|\cdot\|_{w,\infty})$. As $\mathcal{S}_t$ acts as the identity on $\{h\in\Hi_w:\partial_xh=0\}$ and $\Hi^0_w$ is $\mathcal{S}_t$ invariant we get that
  $$ \|  \pi_0 \mathcal{S}_t  g \|_{\Hi_{w,\infty}} =  \|  \pi_0 \mathcal{S}_t (\pi_0 g) \|_{\Hi_{w,\infty}}=  \|  \pi_0 (g) \|_{\Hi_{w,\infty}} $$ 
 and (i) of Proposition~\ref{prop:perm abstract} holds with $\gamma = 0$ .
  Moreover 
  $$ \| \pi_1 \mathcal{S}_t g \|_{\mathcal{H}_{w,\infty}} =  \| \pi_1 \mathcal{S}_t (\pi_1 g )\|_{\mathcal{H}_{w,\infty}}  = \|  \mathcal{S}_t (\pi_1 g )\|_{\mathcal{H}_{w,\infty}} \leq e^{-\beta}  \|  \pi_1 g \|_{\mathcal{H}_{w,\infty}} $$
  for $0<\beta \leq \alpha_w/2 $ by Lemma~\ref{lem:contraction} and thus (ii) of Proposition~\ref{prop:perm abstract} holds. 
  
  Moreover, we have
 \begin{align*}
  &\<a(h)-a(g),\pi_0(h-g)\>_{w,\infty} = \<\delta_{\infty}(a(h)-a(g)),h(\infty)-g(\infty)\>_\U  \\
  &\leq  -\beta \|g(\infty)-h(\infty)\|_\U^2 =-\beta \|\pi_0(h-g)\|_{w,\infty}^2
\end{align*}
     by assumption and thus (iii) holds. Conditions (iv) and (v) follow directly form the Lipchitz conditions and the observation that $\delta_{\infty}\pi_1(h)=0$ and (vi) holds by assumption.
\end{proof}
We remark that while the conditions of Proposition~\ref{inv:distribution} and Proposition~\ref{inv:distribution 2} are stated in terms of the coefficient functions $a$ and $b$ using Proposition~\ref{prop:lipandgrowth-cond} it is straightforward to derive conditions purely based on the SVIE coefficients. The formulation here however is more general.

\subsection{Examples}
In the final part of this section we gather some examples. We start with a simple one, namely the classical mean-reverting Ornstein-Uhlenbeck process.

\begin{example}\label{ex:OU}
 We first consider the Ornstein-Uhlenbeck process on $\mathbb{R}$ defined by
\begin{equation}\label{OU:example}
X(t)=x_0+\int_0^t \lambda (\theta - X(s))\,ds+\int_0^t\sigma \,dW(s)\,,
\end{equation}
where $W$ is a Brownian motion. Here the coefficient functions $\mu$ and $\sigma$ are constant in the first argument and given by $\mu (t-s, u)= \lambda (\theta - u)$ and $\sigma (t-s,u)=\sigma\in\mathbb R$ for $0\leq s\leq t$, $u\in\mathbb R$ where $\lambda>0$, $\theta\in\mathbb R$ are constants. The corresponding functions on 
$\Hi_w(\mathbb R)$ are given by
 \begin{align*}
   a(h) &= (x\mapsto \lambda (\theta - \delta_0h)), \\
   b(h) &= (x\mapsto \sigma)
 \end{align*}
for $h\in \Hi_w(\mathbb R)$. However, due to the specification of $a$ and $b$ one can easily observe that the solution $Y$ to \ref{eq:hyp-spde} stays in $\{h\in\Hi_w:\partial_xh=0\}$ and one could also define $a$ as $a(h) : x\mapsto \lambda (\theta - \delta_{\infty}h)$. We see then that $a$ and $b$ satisfy the inequalities in Theorem \ref{inv:distribution 2} with $L_a = 0 = L_b$ and $0<\beta \leq \lambda$ where $\alpha_w:=\inf_{x\geq 0} \frac{w' (x)}{w(x)} >0$ is assumed to exist and to be positive (e.g. $w(x) = \exp(\rho x)$ for some $\rho>0$). 

If we had chosen the space $\Hi$ of constant functions in $\Hi_w(\mathbb R)$ with the trace norm instead, then we could apply Proposition \ref{prop:perm abstract} directly. We find that on this space $(\mathcal S_t)_{t\geq 0}$ is simply the identity and we may chose $\gamma=0$, $\pi_0$ equal to the identity, $\pi_1=0$, $\beta = \lambda$, $L_a = 0 = L_b$ and conditions (i) to (vi) are met. 
Condition (vii) of Proposition \ref{prop:perm abstract} reads as
 $$ 0 < 2\beta = 2\lambda, $$
i.e.\ again we find the invariant law for the OU process irrespective of the speed of mean-reversion.

\end{example}

\begin{example}
Our second example uses a generic separable Hilbert space $\U$, 
 \begin{align*}
     x_0 &\in \U, \\ 
     \mu(t,u) &:= \frac{1}{4}e^{-t}u, \\
     \sigma(t,u) &:= \frac{1}{4}e^{-t}u, \\
 \end{align*}
 for $t\geq 0$, $u\in\U$ and a $1$-dimensional mean zero and square-integrable L\'evy process $L$ with $\E|L(1)|^2 = 1$, i.e.\ the equation of interest is
  \begin{align*}
     X(t) &= x_0 + \int_0^t \mu(t-s,X(s)) ds + \int_0^t \sigma(t-s,X(s)) dL(s) \\
          &= x_0 + \int_0^t \frac{1}{4}e^{s-t}X(s) ds + \int_0^t \frac{1}{4}e^{s-t}X(s) dL(s).
\end{align*}   
 We define as usual
  \begin{align*}
     a(h) &:= \mu(\cdot,\delta_0h) = \frac{1}{4}e^{-(\cdot)}\delta_0h, \\
     b(h) &:= \sigma(\cdot,\delta_0h) = \frac{1}{4}e^{-(\cdot)}\delta_0h
  \end{align*}
  which for fixed function $h$ are in the space $\Hi_w$ with $w(x)=\exp(x)$. Clearly, $a$, $b$ satisfy the Lipschitz and linear growth condition of Theorem~\ref{Volterra:sol} and hence, the SVIE has a unique adapted \cadlag solution $X$. Also, $\sigma(\infty,u) = 0  =\mu(\infty,u)$ for any $u\in \U$ and, thus, we have what we may coin as temporary impact. We find $\alpha_w = 1$ and 
 \begin{align*}
    \|a(h)-a(g)\|^2_{w,\infty} &= \frac{1}{16}\|\delta_0(h-g)\|^2_\U \int_0^\infty e^{-2s}w(s)ds \\
          &= \frac{1}{16}\|h-g\|^2_{w,\infty} \|\delta_0|_{\Hi_w^0}\|^2_{\mathrm{op}}
\end{align*}   
for any $f,g\in\Hi_w^0$ where we used that according to Proposition \ref{p:infinity} the operator norm of $\delta_0$ on $\Hi_w^0$ equals 
 $$\sqrt{\int_0^\infty \frac1{w(s)}ds} = 1.$$ With the choice $L_a := L_b := \frac{1}{4}$ we find that
   $$ L_b^2 + 2L_a \leq \frac{9}{16} < 1 = \alpha_w $$
  and, hence, the requirements of Theorem \ref{inv:distribution} are met. Consequently, there is a limiting distribution for $X$.

\end{example}

\bibliographystyle{plainnat}
\bibliography{SVIE}

\end{document}